\begin{document}
\newcommand{\xc}{\mathcal{X}}\newcommand{\yc}{\mathcal{Y}}
\newcommand{\ch}{\mathcal{H}}\newcommand{\cs}{\mathcal{S}}
\newcommand{\cv}{\mathcal{V}}\newcommand{\cd}{\mathcal{D}}
\newcommand{\cf}{\mathcal{F}}
\newcommand{\mt}{\mbox{}^t}
\newcommand{\pb}{\mathbb{P}}\newcommand{\cb}{\mathbb{C}}\newcommand{\qb}{\mathbb{Q}}
\newcommand{\zb}{\mathbb{Z}}\newcommand{\rb}{\mathbb{R}}\newcommand{\nb}{\mathbb{N}}
\newcommand{\oc}{\mathcal{O}}\newcommand{\lc}{\mathcal{L}}
\newcommand{\ep}{\varepsilon}

\newcommand{\Pic}{\mathop{Pic}}

\theoremstyle{plain}
  \newtheorem{thm}{Theorem}
  \newtheorem*{main}{Main Theorem}
  \newtheorem{defthm}[thm]{Definition-Theorem}
  \newtheorem{prop}{Proposition}[section]
  \newtheorem{lem}[prop]{Lemma}
  \newtheorem{cor}[thm]{Corollary}
  \newtheorem*{conj}{Conjecture}
\newtheorem{Def}{Definition}[section]
\numberwithin{equation}{section}

\title{Families of hypersurfaces of large degree}
\author{Christophe Mourougane}

\address{Christophe Mourougane\\Institut de Recherche Mathématiques de Rennes (IRMAR)\\
Campus de Beaulieu\\ 35402 Rennes.}
\email{christophe.mourougane@univ-rennes1.fr}
\date{\today}\maketitle
\begin{abstract}
We show that general moving enough families of high degree hypersurfaces in $\pb^{n+1}$ 
do not have a dominant set of sections.
\end{abstract}

\section{Introduction}
In ~\cite{grauert}, Grauert solved Mordell conjecture for curves over function fields.
Lang generalised this statement in~\cite{lang}.
\begin{conj}[Lang’s conjecture over function fields]
 Let $\pi : X\to Y$ be a projective surjective morphism of complex algebraic manifolds, whose generic fibre is of general type. If $f$ is not
birationally trivial, then there is a proper subscheme of $X$  that contains the image of all sections of $\pi$.
\end{conj}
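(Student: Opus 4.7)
The plan is to attack the statement by producing many relative jet differentials on $X$ and showing that every section of $\pi$ must be subject to the differential equations they cut out. The starting point is a Demailly--Siu type result: because the generic fibre $X_y$ is of general type, for $k$ and $m$ large enough the bundle of invariant $k$-jet differentials of weighted degree $m$ on $X_y$, twisted by a small negative line bundle, has many global sections vanishing on an ample divisor. My first step would be to globalise this fibrewise production, constructing sections of a bundle of the shape $E_{k,m}\Omega_{X/Y}^1 \otimes \pi^* L^{-1}$ (possibly further twisted by a power of the relative canonical bundle) whose restriction to a generic fibre recovers the fibrewise jet differential.

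The second step is to convert these relative jet differentials into genuine obstructions on sections. For any section $\sigma : Y \to X$, differentiating $\sigma$ up to order $k$ and substituting into a relative jet differential yields a section of a line bundle on $Y$. Here is where the ``not birationally trivial'' hypothesis enters: by Viehweg--Kollár bigness of $\pi_* \omega_{X/Y}^{\otimes \ell}$, one can arrange the twist so that the resulting section on $Y$ cannot vanish identically on $Y$ unless $\sigma(Y)$ lies in the base locus of the family of jet differentials. The intersection of these base loci inside $X$ is then the proper subscheme claimed in the conjecture.

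The main obstacle will be the fibrewise-to-global passage with the correct twisting: one must show that the negative twist that Demailly--Siu requires fibrewise can be dominated by positivity pulled back from $Y$, coming either from an ample class on $Y$ or from $\pi_* \omega_{X/Y}^{\otimes \ell}$. This is exactly where a \emph{quantitative} gain on the fibre is needed, and it is plausible that one must assume the fibres are hypersurfaces of very large degree in order to make the numerics work --- this is presumably why the paper's \emph{Main Theorem} treats only the hypersurface case. A secondary, and serious, obstacle appears when $\dim Y > 1$: jet differentials on $Y$ no longer vanish automatically for reasons of Riemann--Roch, and one must invoke a Bogomolov- or Miyaoka-type positivity statement applied to the image of $\sigma$ to rule out Zariski density.
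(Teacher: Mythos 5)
The statement you are attempting to prove is labelled a \emph{conjecture} in the paper and is not proved there: the paper's Main Theorem is a special case, for ``moving enough'' families of high-degree hypersurfaces in $\pb^{n+1}$ parametrised by a curve $B$. There is therefore no proof in the paper to check your steps against, and what you have outlined is a strategy for an open problem, not a proof of the conjecture.

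As a strategic outline your sketch does identify real obstacles (the quantitative demands on fibrewise positivity; the failure of the Riemann--Roch vanishing argument when $\dim Y>1$), but it diverges from the paper's actual method for the hypersurface case in ways that matter. First, the paper does not work with the Green--Griffiths--Demailly bundles $E_{k,m}\Omega^1_{X/Y}$: it builds Grauert-style jet spaces $\xc_k$ for \emph{sections}, by iterated projectivisation of the quotient $\cf_k$ of $\Omega_{\xc_k}$ that encodes the lifting relation, together with the forbidden divisors $\cd_k$; the Schwarz lemma and the Morse-inequality computations are carried out on these custom towers, not on a globalised Demailly--Siu fibrewise construction. Second, the base positivity is not extracted from Viehweg--Koll\'ar bigness of $\pi_\star\omega_{X/Y}^{\otimes\ell}$; the paper instead \emph{imposes} the ``moving enough'' condition (that $\deg\lambda_0$ equal $(n+1)$ times the degree of a rational function on $B$), which is exactly what lets the Voisin degeneration argument determine the nef and pseudo-effective cones of $\xc$ so that holomorphic Morse inequalities can be applied. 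Non-isotriviality only enters at the very end, via Maehara--Moriwaki, to exclude constant sections. Third, and most critically, your proposal has no mechanism for controlling the base locus: asserting that ``the intersection of the base loci is the proper subscheme'' is not an argument, since a priori that intersection could be all of $X$. The paper addresses this by producing low-pole-order meromorphic vector fields on the universal family $\mathfrak{X}_\kappa\to\pb^N$, following Clemens, Voisin, Siu, P\u aun and Merker, and differentiating the jet equation in the parameter directions; this is precisely the step that converts the existence of differential equations into a proper algebraic subset of $\xc$ containing all non-constant sections. Without an analogue of this step, your strategy does not reach the stated conclusion even in the hypersurface case.
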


Grauert's proof can be red as a construction of first order differential equations fulfilled by all but finite number of sections of the family. First order differential equations are also enough to deal with families of manifolds with ample cotangent bundles~(\cite{noguchi2}\cite{moriwaki}).
We implement this idea in higher dimensions with higher order differential equations, in a case where the positivity assumption is made only for the canonical bundle,  to prove the
\begin{main}
 \label{theomain}  
For general moving enough families of  high enough degree hypersurfaces in $\pb^{n+1}$, there is a proper algebraic subset of the total space that contains the image of all 
 sections. 
\end{main}
General families are in particular required  to be non-birationally isotrivial.
By definition, ``moving enough families'' are those families parametrised by a curve $B$ with variation given by a line bundle on $B$ whose degree is large and $(n+1)$-times the degree of a rational function on $B$.

We point out that Noguchi~\cite{noguchi} gave a proof of Lang's conjecture when the members of the family are assumed to be hyperbolic and the smooth part is assumed to be hyperbolically embedded. Eventhough Kobayashi conjectured that a generic hypersurface of large degree in $\pb^{n+1}$ is hyperbolic, our proof does not rely on properties of families hyperbolic manifolds, like normality. We benefit however from the recent works dealing with Kobayashi conjecture, especially from~\cite{demailly}.

The first part of the work describes general tools for dealing with higher order jets.
The second part is devoted to the proof of the
\begin{thm}\label{theo1}
All sections of general moving enough families of  high enough degree hypersurfaces in $\pb^{n+1}$ fulfil a differential equation of order $n+1$. 
\end{thm}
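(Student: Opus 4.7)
The plan is to apply a relative version of the Semple--Demailly jet differentials machinery to the total space $\xc$ of the family $\pi\colon\xc\to B$. Since $\dim\xc=n+1$, I would build the Semple tower $\xc_{n+1}\to\xc$ of order $n+1$ associated to the pair $(\xc, T_\xc)$, together with its tautological line bundle $\oc_{\xc_{n+1}}(1)$ whose direct image is the Demailly bundle $E_{n+1,m}\Omega^*_\xc$ of invariant jet differentials of order $n+1$ and weighted degree $m$. Any section $s\colon B\to\xc$ of $\pi$ lifts canonically to a curve $s_{[n+1]}\colon B\to\xc_{n+1}$, and a nonzero element
$$\sigma\in H^0\bigl(\xc_{n+1},\,\oc_{\xc_{n+1}}(m)\otimes \pi_{n+1}^*L^{-1}\bigr)$$
for $L$ suitably positive will pull back via every such lift to a section of a negative-degree line bundle on $B$, forcing it to vanish. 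This vanishing is exactly a differential equation of order $n+1$ satisfied by $s$.

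The technical core is therefore the existence of such global jet differentials. Here I would invoke Demailly's positivity estimates for generic hypersurfaces of high degree, which produce on each smooth fibre $X_b\subset\pb^{n+1}$ nonzero invariant jet differentials vanishing to a controlled order along an ample divisor. Applying Riemann--Roch or algebraic Morse inequalities to the relative Semple bundle, combined with fibrewise vanishing and Grauert base change, I would deduce that the direct image
$$\pi_*\bigl(E_{n+1,m}\Omega^*_{\xc/B}\otimes\lc^{-1}\bigr)$$
is a nonzero sheaf on $B$ for a suitable relatively ample $\lc$ on $\xc$ once $d\gg 0$. The ``moving enough'' hypothesis enters here: the positivity of the variation line bundle $F$ on $B$ -- whose degree is at least $(n+1)$ times the degree of some rational function on $B$ -- provides exactly the amount of additional positivity needed to compensate for the negative $\lc^{-1}$ twist inherited from the fibrewise Demailly construction. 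A short Leray/Koszul argument then upgrades the nonvanishing of the push-forward into an honest global section $\sigma$ over $\xc_{n+1}$ of the desired twisted tautological bundle.

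The main obstacle will be the positivity bookkeeping, which the ``general tools for dealing with higher order jets'' announced in the first part of the paper should be designed to handle: one must simultaneously track (i) the Schur-type Chern class inequalities on the Semple tower of the universal degree $d$ hypersurface that yield fibrewise nonvanishing, (ii) the extra negativity on $B$ coming from pulling back the universal hypersurface along $B\to\pb H^0(\pb^{n+1},\oc(d))$, and (iii) the positivity from the moving hypothesis. Balancing these three terms so that $L$ is positive enough to make $s_{[n+1]}^*\sigma$ a section of a genuinely negative line bundle on $B$ for every section $s$ of $\pi$, while keeping the global section $\sigma$ alive, is the heart of the argument. Once this balance is achieved the conclusion is formal: $s_{[n+1]}^*\sigma\equiv 0$ on $B$ is a nontrivial differential equation of order $n+1$ which every section of $\pi$ must satisfy.
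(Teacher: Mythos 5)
Your first paragraph is close in spirit to the paper: one works with a jet tower of order $n+1$ over $\xc$, lifts a section $s$ of $\pi$ to $s_{n+1}\colon B_\rho\to\xc_{n+1}$, and notes that a global section of $\oc_{\xc_{n+1}}(\underline m)$ twisted so that its pullback along $s_{n+1}$ lands in a negative-degree line bundle on $B_\rho$ must vanish on the lift, which is the sought-for differential equation. Two points of precision, however. First, the tower in the paper is not the usual Semple--Demailly tower of $\xc$ but an adapted version: at each level $\xc_{k+1}=\pb(\cf_k)$, the bundle $\cf_k$ (rank $n+1$) is the quotient of $\Omega_{\xc_k}$ fitting $0\to\oc_{\xc_k}(1)\to\cf_k\to\Omega_{\xc_k/\xc_{k-1}}\to 0$, and the construction records the divisors $\cd_k$ of vertical directions that jets of genuine sections must avoid. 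Second, your twist $\pi_{n+1}^\star L^{-1}$ with $L$ ``suitably positive'' on $\xc$ would not give vanishing uniformly in $s$: the degree of $s^\star L$ grows with the height of $s$. The paper resolves this by allowing only a twist pulled back from $B$ together with a \emph{negative} $\oc_{\pb^{n+1}}$-component, so that the pullback via $s_{n+1}$ has the form $\Omega_{B_\rho}^{\otimes |\underline m|}\otimes s^\star\oc_{\pb^{n+1}}(-|\underline m|\delta)\otimes\rho^\star\lambda^{-|\underline m|}$, and the height contributes a \emph{further negative} amount. This is exactly the role that ``moving enough'' plays through the Schwarz-lemma inequality $\deg\lambda>\frac{\chi_\rho}{\deg\rho}|\underline m|$.

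Your second paragraph is where there is a genuine gap, and it is also where your route diverges from the paper's. You propose to obtain the needed global jet differential by invoking fibrewise Demailly estimates for the hypersurfaces $X_b\subset\pb^{n+1}$, the relative Demailly bundle $E_{n+1,m}\Omega^\star_{\xc/B}$, Grauert base change, and a Leray/Koszul upgrade. But the relative jet bundle $E_{\bullet,m}\Omega^\star_{\xc/B}$ governs jets of curves \emph{tangent} to the fibres, whereas sections of $\pi$ are everywhere \emph{transverse} to the fibres: their jets live in the adapted tower built from $\Omega_\xc$ (the absolute cotangent with the $\pi^\star\Omega_B$ sub/quotient tracked), not in the relative tower built from $\Omega_{\xc/B}$. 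Consequently fibrewise nonvanishing does not transfer to the bundle you actually need, and the order also mismatches (Demailly--Diverio style estimates on an $n$-fold hypersurface concern order $n$, whereas the relevant order here is $n+1=\dim\xc$). The paper instead argues \emph{globally on the tower}: it establishes the nef (and pseudo-effective) cone of $\xc$ for very general moving-enough families via Voisin's degeneration trick (Segre embedding plus a Frobenius-type cover), builds explicit nef line bundles $L_k:=\oc_{\xc_k}(0,2\cdot 3^{k-1};\dots,2,1)$ on each $\xc_k$ by the recursion $\underline{m_k}=(3\underline{m_{k-1}},2)$, computes the Segre classes of $\cf_k$ by a closed recursion, and then directly checks the algebraic Morse inequality $A^D-D\,A^{D-1}\cdot B>0$ on $\xc_{n+1}$ for an explicit $A=L_{n+1}\otimes\cdots\otimes L_1\otimes[\oc_{\pb^{n+1}}(1)\otimes\oc_B(-\frac{r}{(n+1)d})]$ and $B=\oc_{\pb^{n+1}}(3^{n+1}+x)$, in the regime $r\gg d\gg 1$. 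To repair your proposal along the paper's lines you would need to replace the fibrewise/base-change step by this global intersection-theoretic estimate on the section-jet tower, together with the nef-cone computation for the very general total space.
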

Then, adapting general techniques in universal families originating in the work of Clemens~\cite{clemens}, Voisin \cite{voisin} and Siu~\cite{siu}, we obtain the main theorem in the third part.

\thanks{
I thank Claire Voisin for the nice idea she gave to me for computing nef cones. I~discussed the subject of this paper with many people over more than four years.
I would like to thank them all, in a single sentence.}

\section{Jet spaces for sections}
We consider a smooth proper connected family $\pi~:~\xc\to B$ of $n$-dimensional manifolds parametrised by a connected curve $B$, and we intend to construct the jet spaces for sections of~$\pi$.

\subsection{Jets of order one}
We follow the ideas of Grauert~\cite{grauert}.

Consider a section $s~:~B_\rho\to\xc$ of the pull-back family $\pi_\rho~:~\rho^\star\xc\to B_\rho$, where $\rho~:~B_\rho\to B$ is a finite morphism of curves. The map 
$\mt ds~:~ s^\star \Omega_\xc \to \Omega_{B_\rho}$ satisfies $\mt ds\circ s^\star\mt d\pi_\rho=Id_{\Omega_{B_\rho}}$, is hence surjective and provides a rank one quotient of $s^\star \Omega_\xc$.

The corresponding curve $s_1~:~B_\rho\to \xc_1$ inside the bundle 
$\pi_{0,1}~:~\xc_1:=\pb (\Omega_\xc)\to \xc$
of rank one quotients of $\Omega_\xc$ lifts $s$ (i.e. $\pi_{0,1}\circ s_1=s$),
is therefore a section of $\pi_1~:~\xc_1\to B_\rho$ and avoids the divisor $\cd_1:=\pb (\Omega_{\xc/B})$ of vertical differentials.
The latter is the divisor of the section of 
$\pi^\star T_{B}\otimes\oc_{\Omega_\xc}(1)$ 
given by $\mt d\pi~:~\pi^\star\Omega_{B}\to{\Omega_\xc}$. 
We have to study the positivity properties of this line bundle, which transfer into mobility properties of the forbidden divisor $\cd_1$.

\subsection{Second order jets}

As in the preceding section, the curve $s_1~:~B_\rho\to \xc_1$ lifts to a curve inside the bundle of rank one quotients of $\Omega_{\xc_1}$. 
More precisely, the rank one quotient $\mt ds_1~:~ s_1^\star \Omega_{\xc_1} \to\Omega_{B_\rho}$
fulfils the relation $$\mt ds_1\circ s_1^\star \mt d\pi_{0,1}=\mt ds.$$
The map $\mt ds_1$ at the point $[\mt ds]$ of $\xc_1$ vanishes on the image by $\mt d\pi_{0,1}$ of forms in the kernel of the tautological quotient $\mt ds$. 
In other words, $\mt ds_1$ is a rank one quotient of the quotient $\cf_1$ of $\Omega _{\xc_1}$ defined by the following diagram on $\xc_1$. 
$$
\begin{array}{ccccccc} 
&  0            &            &    0        &    &                    &\\
& \downarrow    &            & \downarrow  &    &                    &\\
& S             &      =  &  S            &    &                     &\\
& \downarrow    &         & \mbox{\ \ \ \ \ \ \ \ \ }\downarrow   \mt d\pi_{0,1} 
                                          &    &                     &\\
0\to & \pi_{0,1}^\star\Omega_\xc &\xrightarrow{\mt d\pi_{0,1}}&\Omega _{\xc_1} &\to &\Omega _{\xc_1/ \xc}&\to 0\\
& \downarrow    &          & \downarrow    &    &       ||            &\\
0\to & \oc_{\xc_1}(1) &\xrightarrow{\;\;}& \cf_1&\to &\Omega _{\xc_1/ \xc}&\to 0\\
&  \downarrow   &          & \downarrow     &    &       \downarrow    &\\
&  0            &          &  0            &  &   0                    & \\
\end{array}
$$

Define the second order jet space to be 
$\pi_{1,2}~:~\xc_2:=\pb (\cf_1)\to \xc_1$. 
As in the formalism of Arrondo, Sols and Speiser~\cite{ass}, we need to keep track of the injective map 
$a_2~:~\xc_2\to \pb (\Omega _{\xc_1})$ given by the quotient 
$\Omega _{\xc_1}\to\cf_1$.

We hence get a map $s_2~:~B_\rho\to \xc_2$ defined by the quotient 
$\mt ds_1~:~s_1^\star{\cf_1}\to \Omega_{B_\rho}$. 
Note that $\pi_{1,2}$ is the restriction to 
$\pb (\cf_1)\subset\pb (\Omega _{\xc_1})$ 
of the map defined by the quotient 
$\pi_1^\star\Omega_\xc\xrightarrow{\mt d\pi_{0,1}}\Omega _{\xc_1}$ so that the relation $\mt ds_1\circ s_1^\star \mt d\pi_{0,1}=\mt ds$ is rephrased in the fact that the map $s_2$ is a lifting of $s_1$ (i.e. $\pi_{1,2}\circ s_2=s_1$). 

The map $\mt d\pi_{0,1} : \oc_{\Omega_\xc}(1)\to \cf_1$ gives rise to a section of $\pi_1^\star\oc_{\Omega_\xc}(-1)\otimes \oc_{\cf_1}(1)$ whose divisor 
$\cd_2:=\pb(\Omega _{\xc_1/ \xc})\subset \xc_2$ is not hit by the curve $s_2$ for 
$\mt ds_1\circ s_1^\star \mt d\pi_{0,1}$ vanishes nowhere.

\subsection{Higher order jets}

This scheme inductively leads to the construction of the $k^{th}$-order jet spaces 
$\pi_{k-1,k}~:~\xc_{k}\to \xc_{k-1}$, together with a map 
$a_k~:~\xc_{k}\to \pb (\Omega _{\xc_{k-1}})$
that completes the commutative diagram.
$$
\begin{array}{cccc}
 \xc_{k}=\pb(\cf_{k-1})&\xrightarrow{a_{k}}& \pb (\Omega _{\xc_{k-1}})\\
       \pi_{k-1,k}\searrow &   & \swarrow p_{k-1} & \\
     &   \xc_{k-1}          &              &
\end{array}
$$
Note that $a_{k}^\star\oc_{\Omega_{\xc_{k-1}}}(1)=\oc_{\xc_k}(1)$.
 The bundle $\cf_k$ on $\xc_k$ is the quotient of $\Omega_{\xc_k}$ defined by 
$$
\begin{array}{ccccccc}
&  0         &              &    0      &    &                &\\
& \downarrow &              & \downarrow&    &                &\\
& S_k       &      =        &  S_k      &    &                &\\
& \downarrow&               & \mbox{\ \ \ \ \ \ }\downarrow \mt d\pi_{k-1,k}   &    &                     &\\
0\to & \pi_{k-1,k}^\star\Omega_{\xc_{k-1}}&\xrightarrow{\mt d\pi_{k-1,k}}&\Omega _{\xc_k} &\to &\Omega _{\xc_k / \xc_{k-1}}&\to 0\\
& \downarrow&               & \downarrow &    &        ||       &\\
 0\to & \oc_{\xc_k}(1) &\xrightarrow{\mbox{\;\;}} & \cf_k &
 \to &\Omega _{\xc_k / \xc_{k-1}}&\to 0\\
&  \downarrow &              & \downarrow&  & \downarrow      &\\
&  0          &              &  0        &  &  0              & \\
\end{array}
$$

The $(k+1)^{th}$-order jet space is 
$\pi_{k,k+1}~:~\xc_{k+1}:=\pb (\cf_k)\to\xc_k$ and the map $a_{k+1}~:~\xc_{k+1}\to \pb (\Omega _{\xc_k})$ is the injective map 
associated with the quotient $\Omega_{\xc_k}\to\cf_k$.
Note that the relative dimension of $\pi_{k+1,k}$ is equal to that of $\pi_{k-1,k}$ that is $n$. Therefore $$\dim \xc_k=(k+1)n+1.$$

Now, given a section $s~:~B_\rho\to\xc$ of the pull-back family $\pi_\rho~:~\rho^\star\xc\to B_\rho$,
by a finite morphism of curves $r~:~B_\rho\to B$, assuming that we have constructed the various lifts 
$s_i~:~B_\rho\to \xc_i$, up to the level $k$, we get the $(k+1)^{th}$-order jet $s_{k+1}~:~B_\rho\to\xc_{k+1}$ by considering the surjective map $\mt ds_{k}~:~s_k^\star\cf_k\to \Omega_{B_\rho}$ built from the relation 
$\mt ds_k\circ s_k^\star \mt d\pi_{k-1,k} =\mt ds_{k-1}$. 

Recall that the tautological quotient bundle $\oc_{\xc_{k+1}}(1)$ 
pulls-back to $B_\rho$ via $s_{k+1}$ into the considered quotient $\Omega_{B_\rho}$
$$s_{k+1}^\star\oc_{\xc_{k+1}}(1)=\Omega_{B_\rho}.$$

The map $\mt d\pi_{k-1,k} : \oc_{\xc_k}(1)\to\cf_k$ gives rise to a divisor 
$\cd_{k+1}=\pb(\Omega_{\xc_{k}/\xc_{k-1}})$ on $\xc_{k+1}$ in the linear system
 $|\pi_{k+1,k}^\star\oc_{\xc_{k}}(-1)\otimes \oc_{\xc_{k+1}}(1)|$ 
that the curve $s_{k+1}$ avoids. 

\subsection{Description in coordinates}
Choose a local coordinate $t$ on $B$ and a adapted system of local coordinates on $\xc$, 
$(t,z_1,z_2,\cdots ,z_n)$ such that the map $\pi$ is given
 by $(t,z_1,z_2,\cdots ,z_n)\mapsto t$.
The set of vectors 
$\frac{\partial}{\partial t},\frac{\partial}{\partial z_1},
\frac{\partial}{\partial z_2},\cdots ,\frac{\partial}{\partial z_n}$ provides us 
with a local frame for $T_\xc$. 
This defines relative homogeneous coordinates $[T_1:A_1:A_2:\cdots:A_n]$ on $\xc_1$.

A section $s$ of $\pi$ locally written as 
$t\mapsto(t,z_1(t),z_2(t),\cdots ,z_n(t))$ is differentiated in
$$ds~:~\frac{\partial}{\partial t}\mapsto 
\frac{\partial}{\partial t}+z'_1(t)\frac{\partial}{\partial z_1}+
z'_2(t)\frac{\partial}{\partial z_2}+\cdots 
+z'_n(t)\frac{\partial}{\partial z_n}.$$
The first order jet of the curve $s$ is therefore locally written as 
$s_1~:~B\mapsto\xc_1=P(T_\xc)$,
$$s_1~:~t\mapsto (t,z_1(t),z_2(t),\cdots,z_n(t),[1:z'_1(t):z'_2(t):\cdots:z'_n(t)]).$$
It does not meet the divisor $\cd_1:=P(T_{\xc/B})$ locally given by $T_1=0$.

Outside this divisor, we get relative affine coordinates 
$a_1:=A_1/T_1,a_2:=A_2/T_1, \cdots ,  a_n:=A_n/T_1$.
Note that for the section $s_1$ we infer that $a_j(t)=z'_j(t)$.
The set of vectors 
$$\frac{\partial}{\partial t},\frac{\partial}{\partial z_1},
\frac{\partial}{\partial z_2},  \cdots ,
\frac{\partial}{\partial z_n}, \frac{\partial}{\partial a_1},
\frac{\partial}{\partial a_2},
\cdots ,\frac{\partial}{\partial a_n}$$ provides us with a local frame for $T_{\xc_1}$.
The bundle $\cf_1^\star$ is defined to be 
$$\cf_1^\star:=\{(t,z,[A],v)\in T_{\xc_1}\, /\, d\pi_{0,1}(v)\in [A]\subset T_\xc\}.$$
It has a local frame built with
$$\frac{\partial}{\partial t}+a_1\frac{\partial}{\partial z_1}+
a_2\frac{\partial}{\partial z_2}+\cdots +a_n\frac{\partial}{\partial z_n}
\in \oc_{\xc_1}(-1)$$
and 
$$\frac{\partial}{\partial a_i}\in T_{\xc_1/\xc} , \ \  1\leq i\leq n.$$
This defines relative homogeneous coordinates $[T_2:B_1:B_2:\cdots:B_n]$ on $\xc_2$.

The section $s_1~:~B\to \xc_1-\cd_1$ of $\pi_{1}$ is differentiated in
\begin{eqnarray*}
\lefteqn{ds_1~:~\frac{\partial}{\partial t}\mapsto }\\
&\frac{\partial}{\partial t}+z'_1(t)\frac{\partial}{\partial z_1}
+z'_2(t)\frac{\partial}{\partial z_2}+\cdots +z'_n(t)\frac{\partial}{\partial z_n}
+z''_1(t)\frac{\partial}{\partial a_1}+z''_2(t)\frac{\partial}{\partial a_2} +\cdots+z''_n(t)\frac{\partial}{\partial a_n}\\
=
&\left(\frac{\partial}{\partial t}+a_1(t)\frac{\partial}{\partial z_1}
+a_2(t)\frac{\partial}{\partial z_2}+\cdots 
+a_n(t)\frac{\partial}{\partial z_n}\right)
+z''_1(t)\frac{\partial}{\partial a_1}+z''_2(t)\frac{\partial}{\partial a_2} +\cdots+z''_n(t)\frac{\partial}{\partial a_n}.
\end{eqnarray*}
The second order jet $s_2~:~B\to\xc_2$ is locally written
as
$$s_2~:~t\mapsto (t,z_1(t),z_2(t),\cdots,z_n(t), [1:z'_1(t):z'_2(t):\cdots:z'_n(t)], [1:z''_1(t):z''_2(t):\cdots:z''_n(t)]).$$
It does not meet the divisor $\cd_2:=P(T_{\xc_2/\xc_1})$ locally given by $T_2=0$.

Coordinates in higher order jet spaces are defined similarly.

\section{Schwarz lemma and holomorphic Morse inequalities}
This section is devoted to the tools needed to prove theorem~\ref{theo1}.
Consider the line bundles on $\xc_{k}$ defined by
$$\oc_{\xc_{k}}(\underline{m})
:=\pi^\star_{1,k}\oc_{\xc_{1}}(m_1)\otimes\pi^\star_{2,k}\oc_{\xc_{2}}(m_2)
\otimes\cdots\otimes\oc_{\xc_{k}}(m_k)$$ 
and  $$\oc_{\xc_{k}}(\underline{M\cd})
:=\pi^\star_{1,k}\oc_{\xc_{1}}(M_1\cd_1)
\otimes\pi^\star_{2,k}\oc_{\xc_{2}}(M_2\cd_2)
\otimes\cdots\otimes\oc_{\xc_{k}}(M_k\cd_k).$$
Define  \begin{eqnarray*}
\chi_\rho&:=&\int_{B_\rho} s_1(\Omega_{B_\rho})=-\int_{B_\rho}c_1(T_{B_\rho})=-2\int_{B_\rho} Todd(T_{B_\rho})
=-2\chi (B_\rho)=2g(B_\rho)-2\\&\geq& (\deg\rho)(2g(B)-2)\geq 0.\end{eqnarray*}

Consider a section $\sigma$ of the line bundle $\oc_{\xc_{k}}(\underline{m})\otimes\oc_{\xc_{k}}(\underline{M\cd})
\otimes\pi_{k}^\star \lambda^{-1}$. 
Pull it back to $B_\rho$ via the map $s_k~:~B_\rho\to\xc_k$ into a section $s_k^\star \sigma$ of the line bundle $\Omega_{B_\rho}^{\otimes |\underline{m}|}\otimes \rho^\star \lambda^{-1}$. If the latter bundle has an ample dual bundle 
(i.e. if $\displaystyle\deg \lambda>\frac{\chi_\rho}{\deg\rho}\mid\underline{m}\mid$), 
then the section $s_k^\star \sigma$ has to vanish. This gives 
\begin{lem}[Schwarz lemma]
If  a line bundle $\lambda$ on $B$ has degree $\deg\lambda$ greater than $\frac{\chi_\rho}{\deg\rho}\mid\underline{m}\mid$, 
then for every section $s$ of $\rho^\star\xc\to B_\rho$ and every section $\sigma$ 
of the line bundle
$\oc_{\xc_{k}}(\underline{m})\otimes\oc_{\xc_{k}}(\underline{M\cd})
\otimes\pi_{k}^\star \lambda^{-1}$ on $\xc_k$, the $k^{th}$-order jet $s_k$ of $s$ 
lies in the zero locus of $\sigma$.
\end{lem}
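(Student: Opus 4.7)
The plan is to reduce the statement to a negative-degree calculation on the curve $B_\rho$. Two inputs from the preceding construction are needed: the identity $s_j^\star\oc_{\xc_j}(1)=\Omega_{B_\rho}$ at every level $j\leq k$ (the case $j=k+1$ is displayed in the excerpt, and the intermediate cases follow by composing with $\pi_{j,k}$ using $\pi_{j,k}\circ s_k=s_j$), together with the fact that the jet $s_k$ avoids each $\cd_j$.

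First I would pull $\sigma$ back along $s_k$ and split the three tensor factors of the target line bundle. Using the identity above,
$$s_k^\star \oc_{\xc_k}(\underline{m})=\bigotimes_{j=1}^k s_j^\star\oc_{\xc_j}(1)^{\otimes m_j}=\Omega_{B_\rho}^{\otimes|\underline{m}|}.$$
The canonical section of each $\oc_{\xc_k}(\cd_j)$ cuts out the divisor $\cd_j$; since $s_k$ avoids $\cd_j$, the pullback of this section is nowhere zero and trivialises $s_k^\star\oc_{\xc_k}(\cd_j)$. Taking $M_j$\textsuperscript{th} powers and tensoring over $j$, one gets $s_k^\star\oc_{\xc_k}(\underline{M\cd})\cong\oc_{B_\rho}$. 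Finally $s_k^\star\pi_k^\star\lambda^{-1}=\rho^\star\lambda^{-1}$ since $\pi_k\circ s_k=\rho$. Hence $s_k^\star\sigma$ is a section of
$$\Omega_{B_\rho}^{\otimes|\underline{m}|}\otimes\rho^\star\lambda^{-1},$$
whose degree equals $|\underline{m}|\chi_\rho-(\deg\rho)(\deg\lambda)$, which is strictly negative by the hypothesis $\deg\lambda>\chi_\rho|\underline{m}|/\deg\rho$. A line bundle of negative degree on the smooth projective curve $B_\rho$ admits only the zero section, so $s_k^\star\sigma\equiv 0$ and $s_k(B_\rho)$ is contained in the zero locus of $\sigma$.

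There is no serious obstacle here: the argument is exactly the degree count sketched in the paragraph immediately preceding the lemma statement. If anything deserves attention, it is only the bookkeeping for the divisorial twists $\oc_{\xc_k}(M_j\cd_j)$ (handled through their canonical sections) and the iteration promoting the single identity $s_{k+1}^\star\oc_{\xc_{k+1}}(1)=\Omega_{B_\rho}$ to its analogues at every intermediate level $j\leq k$.
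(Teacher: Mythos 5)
Your argument is correct and matches the paper's own (very terse) proof, which is the paragraph immediately preceding the lemma: pull $\sigma$ back along $s_k$ to recognise $s_k^\star\sigma$ as a section of $\Omega_{B_\rho}^{\otimes|\underline{m}|}\otimes\rho^\star\lambda^{-1}$ and note this bundle has a negative degree under the hypothesis, so the section vanishes identically. You merely supply the bookkeeping the paper leaves implicit, namely the level-by-level identification $s_j^\star\oc_{\xc_j}(1)=\Omega_{B_\rho}$, the trivialisation of $s_k^\star\oc_{\xc_k}(\underline{M\cd})$ via the nowhere-vanishing pullbacks of the canonical sections of the avoided divisors $\cd_j$, and the identity $\pi_k\circ s_k=\rho$.
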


Note that we considered only those bundles having zero components along the Picard group of $\xc/B$. 
For example, in the family of hypersurfaces in $\pb^{n+1}$ case,  
bounding the intersection number $s(B)\cdot\oc_{\pb^{n+1}}(1)$, called the height of the section $s$, is a main step in proving Lang's conjecture.
We therefore have to try and produce sections of bundles without any component along the Picard group of $\xc/B$.
We will use the algebraic form of holomorphic Morse inequalities to achieve this.
\begin{prop}[Holomorphic Morse inequalities]
 A multiple $mL$ of a line bundle $L$ on a projective manifold of dimension $D$ that can be written as the difference of two nef line bundles $L=A-B$  is effective and big if furthermore the intersection number $A^D-DA^{D-1}\cdot B$ is positive and $m\geq m_0(c_1(A),c_1(B))$.
\end{prop}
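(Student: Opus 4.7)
The plan is to establish the asymptotic estimate
\[
h^0(X,mL)\;\geq\;\frac{m^D}{D!}\bigl(A^D-D\,A^{D-1}\!\cdot\! B\bigr)+O(m^{D-1}),
\]
from which both effectivity and bigness of $mL$ for $m\geq m_0$ follow at once under the hypothesis $A^D-DA^{D-1}\cdot B>0$. This is the algebraic form of holomorphic Morse inequalities in the spirit of Siu and Trapani, bypassing the need for a metric of prescribed signature on $L$.

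First I would reduce to the case where both $A$ and $B$ are ample. Fixing an auxiliary ample class $H$ and replacing the pair $(A,B)$ by $(A+\ep H,\,B+\ep H)$ for a small positive rational $\ep$ keeps the class $L=A-B$ unchanged, makes both summands ample, and preserves the strict inequality $A^D>D\,A^{D-1}\!\cdot\! B$ by continuity of intersection numbers. Clearing denominators reduces the problem to the integral ample case.

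Next comes a telescoping construction. Since $B$ is ample, for $m$ large enough one may pick smooth sufficiently general members $D_1,\ldots,D_m\in|B|$, and splice together the short exact sequences
\[
0\to\oc(mA-jB)\to\oc(mA-(j-1)B)\to\oc_{D_j}\!\bigl(mA-(j-1)B\bigr)\to 0,\qquad 1\leq j\leq m.
\]
Iterating the resulting long exact cohomology inequalities yields
\[
h^0(X,mL)\;\geq\;h^0(X,mA)\,-\,\sum_{j=1}^{m}h^0\!\bigl(D_j,(mA-(j-1)B)|_{D_j}\bigr).
\]

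Finally I would estimate each piece asymptotically in $m$. Ampleness of $A$ and Serre vanishing give $h^0(X,mA)=\tfrac{m^D}{D!}A^D+O(m^{D-1})$ by Riemann--Roch. For each $j\geq 2$, multiplication by a section of the ample bundle $(j-1)B|_{D_j}$ injects $H^0(D_j,(mA-(j-1)B)|_{D_j})$ into $H^0(D_j,mA|_{D_j})$, and this latter space has dimension $\tfrac{m^{D-1}}{(D-1)!}A^{D-1}\!\cdot\! B+O(m^{D-2})$ since $A|_{D_j}$ is ample on the $(D-1)$-dimensional smooth $D_j$ and $A^{D-1}\!\cdot\! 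D_j=A^{D-1}\!\cdot\! B$. Summing the $m$ contributions produces the bound $\tfrac{m^D}{(D-1)!}A^{D-1}\!\cdot\! B+O(m^{D-1})$, and subtracting yields the claimed asymptotic. The main obstacle is making the $O(m^{D-1})$ error terms uniform in $j\in\{1,\ldots,m\}$: this is precisely what Serre vanishing applied to the ample class $A$, with bounds depending only on the Chern classes of $A$ and $B$, delivers, and it is what forces the threshold $m_0$ to depend on $c_1(A)$ and $c_1(B)$ but not on any further auxiliary choice.
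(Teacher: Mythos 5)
The paper itself does not prove the Morse inequality; it treats it as a known result. The paper's ``proof'' simply quotes the lower bound $h^0-h^1\geq \frac{m^D}{D!}\bigl(A^D-D\,A^{D-1}\!\cdot B\bigr)+o(m^D)$ (Demailly's algebraic form of the holomorphic Morse inequalities, cf.\ the reference to Demailly in the bibliography) and then observes that the $o(m^D)$ remainder is built out of intersection numbers of $c_1(A)$, $c_1(B)$ and the Chern classes of $X$, so that the threshold $m_0$ is numerical. You instead reprove the inequality from first principles by the classical telescoping argument over restriction sequences for members of $|B|$. This is a legitimate and self-contained route -- it is essentially the proof one finds in Demailly and in Lazarsfeld -- and it buys independence from external references at the modest cost of having to track uniformity of the error terms by hand.

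One step needs tightening. An ample line bundle need not have a nonzero global section, so the phrase ``multiplication by a section of the ample bundle $(j-1)B|_{D_j}$'' is unjustified for small $j$ (already for $j=2$ the bundle $B|_{D_j}$ may have no sections). Two standard repairs: strengthen the reduction to make $B$ very ample (or at least globally generated), so that every $(j-1)B|_{D_j}$ with $j\geq 2$ does carry a section; or estimate the boundedly many terms with $j-1$ below the Serre threshold directly by Riemann--Roch on $D_j$, noting that the shift by $-(j-1)B$ is $O(1)$ in $m$ and does not change the leading $\frac{m^{D-1}}{(D-1)!}A^{D-1}\!\cdot B$. A smaller point: after the $\ep$-perturbation, the asymptotic you actually prove involves the perturbed intersection numbers $(A+\ep H)^D-D(A+\ep H)^{D-1}\!\cdot(B+\ep H)$; the estimate displayed at the top of your proof with $A^D-D\,A^{D-1}\!\cdot B$ is therefore slightly stronger than what the argument literally delivers. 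This is harmless for bigness and for the numerical dependence of $m_0$, but it should be stated as such rather than as the exact inequality for the original nef classes.
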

To prove that $m_0$ only depends on  $(c_1(A),c_1(B))$, we just recall that, in the inequality that estimates the alternating sum of dimensions of cohomology groups
$$h^0-h^1(X,L^{\otimes m})\geq \frac{m^D}{D!}( A^D-DA^{D-1})\cdot B+o(m^{D}),$$
the remainder is made of numerical data.

There are three elements to settle to get the proof of theorem~\ref{theo1},
the construction of nef line bundles on jet spaces, 
the inequality $\displaystyle\deg \lambda>\frac{\chi_\rho}{\deg\rho}\mid\underline{m}\mid$
and the positivity of the intersection number $A^D-DA^{D-1}\cdot B$.

Note that we may allow a negative part along the Picard group of $\xc/B$. 
This will give the height estimates.

\section{The nef cones}
We will now restrict to the situation of a family of hypersurfaces in $\pb^{n+1}$ given by a section $s_0$ of an ample line bundle $L_0$ on $B\times\pb^{n+1}$. 
We will assume that the genus of $B$ and the relative dimension $n$ are at least $2$.
$$\begin{array}{ccccc}
   \xc &\xrightarrow{\iota} &B\times\pb^{n+1} &\xrightarrow{pr_2}&\pb^{n+1}\\
{\pi}\downarrow& \swarrow pr_1 &&\\B& && \end{array}$$
The map $pr_2\circ\iota~:~\xc\to \pb^{n+1}$ will be denoted by $R$.
This gives the further sequence on $\xc$
\begin{equation}\label{inj}
 0\to {L_0^\star}_{\mid \xc}\xrightarrow{\mt ds_0} {\Omega_B\oplus\Omega_{\pb^{n+1}}}_{|\xc}
\xrightarrow{\mt d\iota} \Omega_\xc =\cf_0\to 0.
\end{equation}
From Leray-Hirsch theorem, we know that $\Pic (B\times\pb^{n+1})
= pr_1^\star\Pic B\oplus pr_2^\star\Pic \pb^{n+1}$.
In particular, we will write $L_0$ as $pr_1^\star\lambda_0\otimes pr_2^\star\oc_{\pb^{n+1}}(d_0)$.
Note that $\oc_{\pb^{n+1}}(d_0)=(L_0)_{|pr_1^{-1}b}$ is ample ($d_0>0$) 
and $(pr_1)_\star L_0=\lambda_0\otimes S^{d_0}\cb^{n+2}$ is effective ($\deg\lambda_0\geq 0$). 

\subsection{The nef cone of $\xc$}
For the line bundle $L_0$ is assumed to be ample and $\xc$ is of dimension at least $3$, Lefschetz hyperplan theorem reads 
$$\Pic \xc=\iota^\star\Pic (B\times\pb^{n+1})
= \pi^\star\Pic B\oplus R^\star\Pic\pb^{n+1}.$$
For a line bundle $\lambda$ on $B$ and an integer $d$, we will denote by $\oc_\xc(\lambda,d)
=\pi^\star\lambda\otimes R^\star\oc_{\pb^{n+1}}(d)$ 
the restriction to $\xc$ of the line bundle $\lambda\boxtimes\oc_{\pb^{n+1}}(d)=pr_1^\star\lambda
\otimes pr_2^\star \oc_{\pb^{n+1}}(d)$.

The line bundle $\pi^\star\oc_B(b)$, nef but not ample, has its Chern class lying on a vertices of the nef cone of $\xc$.
If the morphism $R~:~\xc\to \pb^{n+1}$ is not finite (e.g. the  section  defining $\xc$ does not involve all the homogeneous coordinates on $\pb^{n+1}$)
then the line bundle $R^\star \oc_{\pb^{n+1}}(d)$ gives the second vertices. This is not the generic case.

The top intersection number of the first Chern class $c_1(\oc_\xc(\lambda,d))\in NS(\xc)$ is given by
\begin{eqnarray*}
\lefteqn{c_1(\oc_\xc(\lambda,d))^{n+1}}&&\\
&=&\iota^\star\left[pr_1^\star c_1(\lambda )+ 
pr_2^\star c_1(\oc_{\pb^{n+1}}(d))\right]^{n+1}\\
&=&\left[ c_1(\lambda_0 )+ c_1(\oc_{\pb^{n+1}}(d_0))\right].
\left[ c_1( \oc_{\pb^{n+1}}(d))^{n+1}+(n+1) c_1(\lambda )
 pr_2^\star c_1(\oc_{\pb^{n+1}}(d))^{n}\right]\\
&=&d^n\left[d \deg(\lambda_0 )+(n+1)d_0 \deg(\lambda )\right].
\end{eqnarray*}
It has to be non-negative on the nef cone.
We hence get in $NS_\rb(\xc)\equiv \rb^2$
\begin{eqnarray*}
\Big\{(l,d)/  
d\geq 0, \quad l\geq 0\Big\}&=&\iota^\star Nef (B\times\pb^{n+1})\\
&&\subset Nef(\xc)
\subset\Big\{(l,d) /   d\geq 0,\quad l\geq -\frac{\deg\lambda_0}{(n+1)d_0}d\Big\}.
\end{eqnarray*}

\subsection{The pseudo-effective cone of $\xc$}
We now compute the pseudo-effective cone, $Eff(\xc)\supset Nef(\xc)$.
Take $\deg\lambda<0$ and $d >0$.
The push-forward by $pr_1$ of the sequence defining the structure sheaf of $\xc$
tensorised by $\lambda\boxtimes\oc_{\pb^{n+1}}(d)$, reads
$$0\to(pr_1)_\star\left(\lambda\otimes\lambda_0^{\star}
\boxtimes\oc_{\pb^{n+1}}(d-d_0)\right)
\to (pr_1)_\star(\lambda\boxtimes\oc_{\pb^{n+1}}(d))
\to \pi_\star\oc_\xc(\lambda,d)\to 0$$
that is
$$0\to\lambda\otimes\lambda_0^{\star}\otimes S^{d-d_0}\cb^{n+2}
\to\lambda\otimes S^{d}\cb^{n+2}
\to\pi_\star\oc_\xc(\lambda,d)\to 0.$$
For $\deg\lambda <0$, the associated long exact sequence gives
\begin{align*}0\to & H^0(\xc,\oc_\xc(\lambda,d))
\to\\ H^1(B,\lambda\otimes\lambda_0^{\star})\otimes S^{d-d_0}\cb^{n+2}
\to H^1(B,\lambda)\otimes S^{d}\cb^{n+2}
\to & H^1(B,\pi_\star\oc_\xc(\lambda,d))\to 0.
\end{align*}
Note that if $\ell$ is large, 
$\mathcal{R}^1 \pi_\star\oc_\xc(\lambda^{\otimes \ell},\ell d)$ vanishes,
so that 
$H^1(B,\pi_\star\oc_\xc(\lambda^{\otimes \ell},\ell d))$
 and $H^1(\xc,\oc_\xc(\lambda^{\otimes \ell},\ell d))$ become isomorphic.
We infer
\begin{align*}
 h^0(\xc,\oc_\xc(\lambda,d)) \geq & h^1(B,\lambda\otimes\lambda_0^{\star})\otimes
S^{d-d_0}\cb^{n+2}-h^1(B,\lambda)\otimes S^{d}\cb^{n+2}\\
\geq & -\chi(B,\lambda\otimes\lambda_0^{\star})\otimes S^{d-d_0}\cb^{n+2}+\chi(B,\lambda)\otimes S^{d}\cb^{n+2}\\
\geq & [\deg\lambda+1-g(B)]\binom{d+n+1}{n+1}\\
&-[\deg\lambda-\deg\lambda_0+1-g(B)]\binom{d-d_0+n+1}{n+1}.
\end{align*}
We find that if $\displaystyle
\deg\lambda> -\frac{\deg\lambda_0}{(n+1)d_0}d$,
for large $\ell$, $H^0(\xc,\oc_\xc(\lambda^{\otimes \ell},\ell d) )\not= 0$.
Hence
$$\Big\{d\geq 0, \quad l\geq 0\Big\}
\subset Nef(\xc)
\subset\Big\{d\geq 0, \quad l\geq  -\frac{\deg\lambda_0}{(n+1)d_0}d\Big\}\subset Eff(\xc).
$$

\subsection{The cones in the generic case}
The ideas described here are due to Claire Voisin.
The key result is the following
\begin{lem}\label{lemme}
 Let $\yc\subset T\times P\to T$ be a family of complex algebraic ample hypersurfaces of dimension at least 3 of a projective manifold $P$.
Assume that $Y_0$ is irreducible, and that the nef cone and the pseudo-effective cone of its normalisation coincide. Then the nef cone and the pseudo-effective cone of a very general member of $\yc\to T$ do also coincide.
\end{lem}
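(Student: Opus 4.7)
My plan is to transfer both the pseudo-effective cone and the nef cone of a very general smooth fiber $Y_t$ to the normalisation $\widetilde{Y_0}$ via specialisation, and to use the hypothesis on $\widetilde{Y_0}$ to force their equality on $Y_t$. For general $t\in T$ the fiber $Y_t$ is a smooth ample hypersurface in $P$ of dimension at least three, so the Lefschetz hyperplane theorem gives $\Pic(P)\xrightarrow{\sim}\Pic(Y_t)$, and $NS(Y_t)_\rb$ is canonically identified with $NS(P)_\rb$. Composing restriction to $Y_0$ with pullback along the normalisation $\nu:\widetilde{Y_0}\to Y_0$ produces a natural map $\mu:NS(P)\to NS(\widetilde{Y_0})$; all four cones in play are therefore readable inside one common finite-dimensional space.

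I would first show that pseudo-effectivity descends from a very general $Y_t$ to $\widetilde{Y_0}$. Let $\alpha\in NS(P)$ have $\alpha|_{Y_t}$ pseudo-effective for very general $t$, fix an ample class $H$ on $P$ and a positive rational $\epsilon$. For each $m\geq 1$ the locus $E_m\subset T$ where $m(\alpha+\epsilon H)|_{Y_t}$ is effective is constructible, and $\bigcup_m E_m$ contains every very general $t$, so for some $m$ the relative Hilbert scheme parametrising effective divisors of class $m(\alpha+\epsilon H)$ on fibers of $\yc/T$ dominates $T$. This Hilbert scheme is a countable union of components, each projective over $T$, so a dominant component is actually surjective, and its fiber over $0$ yields an effective divisor on $Y_0$ in the desired class. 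Pulling back by $\nu$ preserves effectivity, so $\mu(\alpha+\epsilon H)$ is pseudo-effective on $\widetilde{Y_0}$; letting $\epsilon\to 0$ we conclude that $\mu(\alpha)$ is pseudo-effective, hence nef by hypothesis.

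To close the loop I would run the analogous specialisation for curves. Any irreducible curve $C\subset Y_t$ has bounded degree with respect to the polarisation of $P$, so it lies in one of countably many components of a relative Hilbert scheme of curves on $\yc/T$, each projective over $T$. For very general $t$ every such component that meets $Y_t$ dominates, and hence surjects onto, $T$, so $C$ specialises to an effective $1$-cycle $C_0$ on $Y_0$. Its proper transform $\widetilde{C_0}$ on $\widetilde{Y_0}$ satisfies $\nu_\star\widetilde{C_0}=C_0$, so the projection formula yields $\mu(\alpha)\cdot\widetilde{C_0}=\alpha|_{Y_0}\cdot C_0$, while flatness of $\yc\to T$ yields $\alpha|_{Y_0}\cdot C_0=\alpha|_{Y_t}\cdot C$. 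Nefness of $\mu(\alpha)$ on $\widetilde{Y_0}$ therefore forces $\alpha|_{Y_t}\cdot C\geq 0$ for every irreducible curve $C\subset Y_t$, so $\alpha|_{Y_t}$ is nef, as desired.

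The principal obstacle I anticipate is making both specialisation steps rigorous in the presence of possible singularities of $Y_0$: one has to treat components of the limit divisor or $1$-cycle that sit in the non-normal locus via a careful choice of proper transform, and one has to verify that the various Hilbert scheme decompositions are genuinely countable so that a single "very general" locus accommodates all the quantifications at once. A subsidiary technical point is pinning down, at the outset, the compatibility between algebraic classes on $Y_t$, on $Y_0$, and on $\widetilde{Y_0}$, so that "the same class" has an unambiguous meaning throughout the argument.
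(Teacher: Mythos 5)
Your proposal follows essentially the same route as the paper: transfer classes via Lefschetz, use properness of components of relative Hilbert schemes to specialise effective divisors and curves from a very general $Y_t$ to $Y_0$, pull back to the normalisation to invoke the hypothesis, and close with the projection formula. The only variation is your $\epsilon H$ perturbation to treat pseudo-effective (rather than effective) classes directly, which is a clean refinement of the paper's argument via the loci $Z_c$; note that the irreducibility of $Y_0$, flagged in the paper, is exactly what guarantees the specialised divisor is not all of $Y_0$, which you implicitly rely on when asserting that pullback by $\nu$ preserves effectivity.
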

\begin{proof}
The Picard group of any general member is induced by that of $P$ by Lefschetz theorem.
Take a numerical class $c$ in $NS(P)$.
Take a line bundle $\mathcal{L}$ on $P$ in the class $c$ whose restriction to a very general member $Y_t$ is effective.
There is a maximal Zariski closed subset $Z_c$ of $T$ such that the line bundle $\mathcal{L}_{\mid Y_t}$ is algebraically equivalent to an effective line bundle for all $t$ in $Z_c$. Define $Z$ to be the countable union of all those $Z_c$ that are strict in $T$.
Removing the countable union of images $Z'$ in $T$ of components that do not dominate $T$ of the Hilbert scheme of vertical curves in $\yc$, we can ensure that every curve $C$ in $Y_t$ for $t\in T-Z'$ deforms locally around $t$, and by properness of components of the Hilbert schemes, specialises to a curve $C_0$ at~$0$.

Take a $\tau\in T-Z-Z'$. Take a line bundle $\mathcal{L}\in\Pic(P)$ whose restriction to $Y_\tau$ is effective and a curve $C$ in $Y_\tau$. We have to check that $\deg\mathcal{L}_{\mid C}\geq 0$. 
The line bundle $\mathcal{L}_{\mid Y_t}$ is algebraically equivalent to an effective line bundle on the whole of $T$ and therefore $\mathcal{L}_{\mid Y_0}$ pulls back to a nef line bundle on the normalisation of $Y_0$, by hypothesis. 
Here we use the irreducibility of $Y_0$ to make sure that the gotten section do not identically vanish on some irreducible component of $Y_0$.
 For $\deg\nu^\star\mathcal{L}_{\mid\nu^{-1} C_0}\geq 0$, we infer using intersection theory for line bundles on the singular fibre $Y_0$ and especially the projection formula, that the integer $\deg \mathcal{L}_{\mid C}\geq 0$.
\end{proof}

In our setting this leads to the
\begin{prop}
Take a line bundle $\lambda_0\boxtimes\oc_{\pb^{n+1}}(d_0)$ on $B\times\pb^{n+1}$.
Assume that there is a rational function $f$ on $B$ such that $\deg\lambda_0$ is  $(n+1)\deg f$.
 If $\xc$ is very general in the linear system $|\lambda_0\boxtimes\oc_{\pb^{n+1}}(d_0)|$,
then $$Nef(\xc) = Eff(\xc) =\Big\{(l,d)/\quad d\geq 0, 
\quad l\geq  -\frac{\deg\lambda_0}{(n+1)d_0}d\Big\}.
$$
\end{prop}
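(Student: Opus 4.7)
The inclusions from the preceding subsections read
$$\{(l,d):d\geq 0,\ l\geq 0\}\subset Nef(\xc)\subset \Big\{(l,d):d\geq 0,\ l\geq -\tfrac{\deg\lambda_0}{(n+1)d_0}d\Big\}\subset Eff(\xc),$$
so it suffices to show $Nef(\xc)=Eff(\xc)$ for very general $\xc$: the two cones are then squeezed between the same bounds and must both coincide with the middle cone. The plan is to deduce this equality from Lemma~\ref{lemme}.

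To use the rational function $f$, I first enlarge the family. Let $\phi:B\to\pb^1$ be the morphism defined by $f$ and $N:=\phi^\star\oc_{\pb^1}(1)$, so that $\deg N^{\otimes(n+1)}=(n+1)\deg f=\deg\lambda_0$. Consider the total family $\yc\to T$ of hypersurfaces of numerical class $(\deg\lambda_0,d_0)$, parametrised by pairs $(\lambda,[\sigma])$ with $\lambda\in\Pic^{(n+1)\deg f}(B)$ and $[\sigma]\in\pb H^0(B\times\pb^{n+1},\lambda\boxtimes\oc(d_0))$. The base $\Pic^{(n+1)\deg f}(B)$ is a translate of the Jacobian of $B$ and hence irreducible, and for $d_0$ large the relative cohomology is locally free, so $T$ is irreducible and $|\lambda_0\boxtimes\oc(d_0)|$ is one of its fibres; a Baire-type argument then transfers a ``very general'' statement for the enlarged family into a very general statement for members of $|\lambda_0\boxtimes\oc(d_0)|$.

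The $Y_0$ required by Lemma~\ref{lemme} will be built in the fibre of $T$ over $\lambda=N^{\otimes(n+1)}$. Pick a very general $W_0\in|\oc_{\pb^1\times\pb^{n+1}}(n+1,d_0)|$ and set
$$Y_0:=(\phi\times\mathrm{id}_{\pb^{n+1}})^{-1}(W_0)\subset B\times\pb^{n+1}.$$
Since $\phi$ is surjective and $W_0$ irreducible, so is $Y_0$, and its class is $N^{\otimes(n+1)}\boxtimes\oc(d_0)$; moreover it carries a natural finite morphism of degree $\deg f$ onto $W_0$.

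The main obstacle is to verify $Nef(\tilde Y_0)=Eff(\tilde Y_0)$ on the normalisation. This reduces the original question to the analogous coincidence on the simpler ambient $\pb^1\times\pb^{n+1}$, where $\Pic^0$ is trivial and no Jacobian moduli intervene. I would handle this base case by a second application of Lemma~\ref{lemme}: it suffices to exhibit one irreducible degeneration $W_0'$ of a $(n+1,d_0)$-hypersurface whose normalisation has Picard rank one (for instance a degenerate hypersurface birational to a suitable weighted projective space or to another rational variety with rank-one Picard), for then $Nef=Eff$ on $\tilde W_0'$ is automatic. Combined with the projection-formula argument at the heart of the proof of Lemma~\ref{lemme}, this transports the coincidence of cones up the finite map $\tilde Y_0\to \tilde W_0$ and finally yields $Nef(\xc)=Eff(\xc)$ for very general $\xc\in|\lambda_0\boxtimes\oc(d_0)|$.
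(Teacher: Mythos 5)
Your overall strategy (degenerate to a special fibre, invoke Lemma~\ref{lemme}, and transport the cone equality along a finite morphism) is in the same spirit as the paper, but it diverges in the key construction and leaves two genuine gaps.

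First, the request for a degeneration $W_0'$ in $\pb^1\times\pb^{n+1}$ whose normalisation has Picard rank one cannot be met. For any irreducible member of $|\oc_{\pb^1\times\pb^{n+1}}(n+1,d_0)|$ with $n+1>0$ and $d_0>0$, both projections to $\pb^1$ and to $\pb^{n+1}$ restrict to non-constant maps on the normalisation $\tilde W_0'$; the pullback $\beta_0$ of a point of $\pb^1$ satisfies $\beta_0^2=0$, whereas the pullback $\alpha_0$ of the hyperplane satisfies $\alpha_0^{\dim\tilde W_0'}\neq 0$ (the second projection is generically finite onto $\pb^{n+1}$). These two nef, non-torsion classes are therefore linearly independent, so the Picard rank is at least $2$. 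The hypothesis of Lemma~\ref{lemme} does not ask for Picard rank one; it asks that $Nef=Eff$ on the normalisation. The paper achieves this by producing, via a Segre embedding and a general projection, a degeneration whose normalisation is the \emph{product} $B\times X$ with $X$ a smooth hypersurface of $\pb^{n+1}$; a product of a curve with a Picard-rank-one variety has rank two but still satisfies $Nef=Eff$ trivially. Your proposal would need that substitute, and you do not construct it.

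Second, even granting a suitable $W_0'$ for one bidegree, you must reach an arbitrary $d_0$. The paper's explicit product construction only hits bidegrees of the form $((n+1)m\mu,\mu)$; to get $((n+1)m,e)$ for all $e$ it applies a Frobenius-type covering $\psi:\pb^{n+1}\to\pb^{n+1}$ (raising coordinates to the power $e$) and a second application of Lemma~\ref{lemme}. Your argument makes no provision for varying $d_0$, so as written it would only cover isolated bidegrees.

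On the positive side, the transport along the finite map $Y_0\to W_0$ that you invoke does work, though it needs its own short calculation rather than ``the projection-formula argument at the heart of Lemma~\ref{lemme}'': writing an effective class $D$ on $Y_0$ as $l\beta+d\alpha$ and computing the numerical class of $(\phi\times\mathrm{id})_\star D$ on $W_0$ via the intersection numbers $\alpha^{n+1}=(n+1)\deg f$, $\alpha^n\beta=d_0$, $\alpha_W^{n+1}=n+1$, $\alpha_W^n\beta_W=d_0$ gives the pushforward class $l\beta_W+(\deg f)\,d\,\alpha_W$. Nefness of this class on $W_0$ forces $l\geq 0$ and $d\geq 0$, hence nefness of $D$. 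So that step, while under-justified in your write-up, is correct. The detour through the enlarged family over the Jacobian is also unnecessary: the paper works directly with a single linear system and produces the degeneration inside it, which sidesteps the delicate question of whether ``very general in the large family'' restricts to ``very general in a chosen fibre.''
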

\begin{proof}
Take a rational function $f~: B\to\pb^1$ of degree $m$
and a generic hypersurface $X$ of $\pb^{n+1}$ defined by the polynomial $F$ of degree $\mu$.
Construct the finite map gotten from Segre embedding and a general projection
$$\phi~:~B\times X\to\pb^1\times\pb^{n+1}\to\pb^{2n+1}\to\pb^{n+1}.$$
and the map $\Phi=(Id_B,\phi)~:~B\times X\to B\times\pb^{n+1}$.
Denote its image by $\xc_0$. The map $\pb^1\times\pb^{n+1}\to\pb^{2n+1}\to\pb^{n+2}$ is explicitly given in terms of coordinates by
\begin{multline*}
 [X_0:X_1]\times [Y_0:Y_1:\cdots :Y_{n+1}]
\mapsto [2X_1Y_0:X_0Y_0-X_1Y_1:X_0Y_1-X_1Y_2:\cdots\\\cdots : X_0Y_n-X_1Y_{n+1}:2X_0Y_{n+1}].
\end{multline*}
If $F(1,0,0,0)\not=0$ and $F$ is general, we can project to get a finite map 
\begin{align*}
 \pb^1\times X\to& \pb^{n+1}\\
 ([X_0:X_1],[Y_0:Y_1:\cdots :Y_{n+1}])\mapsto&
[X_0Y_0-X_1Y_1:X_0Y_1-X_1Y_2:\cdots\\& \cdots: X_0Y_n-X_1Y_{n+1}:2X_0Y_{n+1}].
\end{align*}
The equation of the image $\xc_0$ of $\Phi$ 
\begin{multline*}
 F(X_0^{n+1}U_{0}+X_0^{n}X_1U_1+X_0^{n-1}X_1^2U_2+\cdots +X^{n+1}_1\frac{U_{n+1}}{2}
:\cdots\\
\cdots 
: X_0^{n+1}U_{n-1}+X^{n}_0X_1U_n+X_0^{n-1}X_1^2\frac{U_{n+1}}{2}
:X^{n+1}_0U_n+X_0^{n}X_1\frac{U_{n+1}}{2}
:X_0^{n+1}\frac{U_{n+1}}{2})=0
\end{multline*}
is of bidegree $((n+1)m\mu,\mu)$.
Note that $n+1$ is the degree of the image of $\pb^1\times\pb^n$ (considered as a divisor in $\pb^1\times\pb^{n+1}$) by the Segre map to $\pb^{2n+1}$.

The nef cone of $B\times X$ is equal to its effective cone.
By lemma~\ref{lemme}, we infer that the same holds true for very general deformations of the image $\xc_0$, whose normalisation is $B\times X$.

We can now apply this to get more bidegrees than just those of type $((n+1)m\mu,\mu)$.
Take $\xc$ to be very general hypersurface of bidegree $((n+1)m,1)$ whose nef cone and the pseudo-effective cone coincide. Consider the Frobenius like finite morphism $\psi~:~\pb^{n+1}\to \pb^{n+1}$ gotten by raising homogeneous coordinates to the power $e$, and the hypersurface 
$\xc':=(Id_{\mid B},\psi)^{-1}(\xc)$.
It is a smooth ample hypersurface of $B\times \pb^{n+1}$ of bidegree $((n+1)m,e)$.
By Lefschetz theorem, its $\qb$-Neron Severi group coincide with that of $B\times \pb^{n+1}$.
Take a curve $C'$ and an effective divisor $D'$ in $\xc'$. Its multiple $eD'$ pulls back from an effective divisor $D$ in $\xc$, which is nef by hypothesis.
$$eD'\cdot C'=\psi^{-1}(D)\cdot C'=e^{n+1}D.\psi (C')\geq 0.$$
The hypersurface $\xc'$ may be not very general, but
applying the lemma again, we infer that the nef cone and the pseudo-effective cone of a very general hypersurface of bidegree $((n+1)m,e)$ coincide.
\end{proof}

\subsection{A nef line bundle on $\xc_1$}
By Leray-Hirsch theorem, the Picard group of $\xc_1=\pb(\cf_0)$ is the group 
$$\Pic\xc_1=\Pic\xc\oplus \zb\oc_{\xc_1}(1)=\Pic B\oplus \Pic \pb^{n+1}\oplus \zb\oc_{\xc_1}(1).$$
 Accordingly, we will use the notation $\oc_{\xc_1}(\lambda, d ; m_1)$.
The bundle $\Omega_{\pb^{n+1}}=\Lambda^{n} T_{\pb^{n+1}}\otimes K_{\pb^{n+1}}$
is a quotient of $\left(\Lambda^{n} \oc_{\pb^{n+1}}(1)^{\oplus n+1}\right)
\otimes K_{\pb^{n+1}}=\left(\Lambda^{n} \oc_{\pb^{n+1}}^{\oplus n+1}\right)
\otimes\oc_{\pb^{n+1}} (-2)$.
Hence, for $\Omega_B$ is globally generated, the quotient~(see \ref{inj})  $\cf_0\otimes\oc_{\pb^{n+1}}(2)$ and therefore the bundle $\oc_{\xc_1}(0,2;1)$ also are.

\subsection{A nef line bundle on $\xc_{k+1}$}
Generally, the bundle $\Omega_{\xc_k/\xc_{k-1}}=\Lambda^{n-1}T_{\xc_k/\xc_{k-1}}
\otimes K_{\xc_k/\xc_{k-1}}$ is a quotient of 
$$\Lambda^{n-1}\left(\pi_{k-1,k}^\star\cf_{k-1}^\star\otimes\oc_{\xc_k}(1)\right)
\otimes\oc_{\xc_k}(-n-1)\otimes\pi_{k-1,k}^\star\det \cf_{k-1}
=\pi_{k-1,k}^\star\cf_{k-1}\otimes\oc_{\xc_k}(-2).$$
Assuming that $\oc_{\xc_{k-1}}(\underline{m_{k-1}})$ and $\oc_{\xc_{k}}(\underline{m_{k-1}},1)$ are nef, we infer from 
the defining sequence of~$\cf_k$
\begin{eqnarray*}
0\to \oc_{\xc_{k}}(3\underline{m_{k-1}},3) &\to& \cf_k\otimes\oc_{\xc_k}(2)
\otimes\pi^\star_{k-1,k}\oc_{\xc_{k-1}}(3\underline{m_{k-1}})
 \\
&&\to\Omega _{\xc_k/\xc_{k-1}}\otimes\oc_{\xc_k}(2)\otimes\pi^\star_{k-1,k}
\oc_{\xc_{k-1}}(3\underline{m_{k-1}})\to 0
\end{eqnarray*}
setting 
 $\underline{m_k}:=(3\underline{m_{k-1}},2)=2(\underline{m_{k-1}},1)+(\underline{m_{k-1}},0)$
 that $\oc_{\xc_{k}}(\underline{m_{k}})$ and $\oc_{\xc_{k+1}}(\underline{m_{k}},1)$ are nef.
 We find that 
 $$L_k:=\oc_{\xc_{k}}(0,2\cdot 3^{k-1};2\cdot 3^{k-2},\cdots, 2\cdot 3^{2},2\cdot 3,2,1)$$ is nef of total degree $3^k$.


\section{Construction of differential equations}

\subsection{Definitions of Segre classes}
Recall that the total Segre class $s(E)$ of a complex vector bundle $E\to X$ of rank $r$ is defined in the following way : its component $s_i(E)$ of degree $2i$ is computed as $p_\star c_1(\oc_E(1))^{r-1+i}$, where $p~:~\pb(E)\to X$ is the variety of rank one quotients of $E$.
From this construction, one deduces that for a line bundle $L\to X$,
$$s_i(E\otimes L)=\sum_{j=0}^i \binom{r-1+i}{i-j} s_{j}(E)c_1(L)^{i-j}.$$
From Grothendieck defining relation for Chern classes
$$ c_r( p^\star E^\star\otimes\oc_E(1))=\sum_{i=0}^r p^\star c_i(E^\star)c_1(\oc_E(1))^{r-i}=0$$
one infers that total Segre class $s(E)$ is the formal inverse $c(E^\star)^{-1}$ of the total Chern class of the dual bundle $E^\star$. It is therefore multiplicative in short exact sequences.

\subsection{Computations on $\xc$}
Set on $B\times\pb^{n+1}$, $A:=pr_2^\star c_1(\oc_{\pb^{n+1}}(1))$, $B:=pr_1^\star c_1(\oc_B(1))$.
In particular, we can write $c_1(L_0)=dA+rB$.
We have the relations $A^{n+2}=0$, $B^2=0$, $A^{n+1}B=1$. 
Set on $\xc$, $\alpha:=R ^\star c_1(\oc_{\pb^{n+1}}(1))=\iota^\star A$, $\beta:=\pi^\star c_1(\oc_B(1))=\iota^\star B$.
We have the relations 
$$\alpha^{n+1}=c_1(L_0)A^{n+1}=r, \ \ \ \ \ \ \ \alpha^n\beta=c_1(L_0)A^nB=d.$$
Hence, $r$ is the degree of the map $R :=pr_2\circ\iota$, and $d$ is the degree of $X_b\subset \pb^{n+1}$.

From the relation~(\ref{inj}) and the Euler sequence on $\pb^{n+1}$, we infer that the total Segre class of $\cf_0=\Omega_\xc$ is 
\begin{eqnarray*}
s(\cf_0)&=&\pi^\star s(\Omega_B)R ^\star s(\Omega_{\pb^3})
\iota^\star s(L_0^\star)^{-1}\\
&=&\pi^\star s(\Omega_B) R ^\star s(\cb^{n+2}\otimes\oc_{\pb^{n+1}}(-1))
\iota^\star c(L_0)=\pi^\star s(\Omega_B) R ^\star c(\oc_{\pb^{n+1}}(1))^{-(n+2)}
\iota^\star c(L_0)\\
     &=& (1+\chi \beta) (1+\alpha)^{-(n+2)}  (1+d\alpha+r\beta).
\end{eqnarray*}
We find that the Segre classes of $\cf_0$ are polynomials in $(\alpha, \beta)$ with coefficients that are linear in $(r,d)$. In particular, 
\begin{eqnarray*}
s_1(\cf_0)&=& (d-n-2)\alpha+(r+\chi)\beta.
\end{eqnarray*}

\subsection{A recursion formula}
Recall the defining relation for the bundles $\cf_k$ on $\xc_k$
\begin{equation}\label{def}
0\to \oc_{\xc_{k}}(1) \to \cf_k 
 \to\Omega _{\xc_k/\xc_{k-1}}\to 0
\end{equation}
still valid for $k=0$, if we set $\xc_{-1}=B$, $\xc_0=\xc$, $\cf_0=\Omega_\xc$, 
$\oc_\xc(1)=\pi^\star \Omega_B$,
that is
$$0\to \pi^\star \Omega_B\to \Omega_\xc\to \Omega _{\xc/B}\to 0.$$
We will also need the relative Euler sequence on $\xc_k$
\begin{equation}\label{euler}
0\to\Omega_{\xc_k/\xc_{k-1}}
\to\pi_{k-1,k}^\star\cf_{k-1}\otimes\oc_{\xc_k}(-1)\to\oc_{\xc_k}\to 0.
\end{equation}

From the two previous sequences, we can compute the total Segre class of $\cf_k$ in terms of the Segre class of $\cf_{k-1}$ ($k\geq 1$).
Set $$\alpha_k:=c_1(\oc_{\cf_{k-1}}(1))=c_1(\oc_{\xc_{k}}(1)).$$

Remark to begin with, that the first Segre classes are easy to compute.
We find
\begin{equation}\label{s1}
s_1(\cf_k)=(\pi_{0,k})^\star s_1(\cf_0)
-n\left(\alpha_k+(\pi_{k-1,k})^\star\alpha_{k-1} +\cdots+(\pi_{1,k})^\star\alpha_{1}\right).
\end{equation}
In general,
\begin{eqnarray*}
s(\cf_k)
&=&s(\oc_{\xc_{k}}(1)) s(\Omega_{\xc_k/\xc_{k-1}})
=s(\oc_{\xc_{k}}(1))s(\pi_{k-1,k}^\star\cf_{k-1}\otimes\oc_{\xc_k}(-1))\\
&=&\sum_{\ell=0}^{(k+1)n+1} \sum_{i=0}^{\ell} s_{\ell-i}(\oc_{\xc_{k}}(1)) s_{i}(\Omega_{\xc_k/\xc_{k-1}})\\
&=&\sum_{\ell=0}^{(k+1)n+1} \sum_{i=0}^{\ell} s_{\ell-i}(\oc_{\xc_{k}}(1)) s_{i}(\pi_{k-1,k}^\star\cf_{k-1}\otimes\oc_{\xc_k}(-1))\\
&=&\sum_{\ell=0}^{(k+1)n+1} \sum_{i=0}^{\ell} \alpha_k^{\ell-i} \sum_{j=0}^i \binom{n+i}{i-j}\pi_{k-1,k}^\star s_{j}(\cf_{k-1})(-\alpha_k)^{i-j}\\
&=&\sum_{\ell=0}^{(k+1)n+1} \sum_{j=0}^\ell \pi_{k-1,k}^\star s_{j}(\cf_{k-1})\alpha_k^{\ell-j}
\sum_{i=j}^{\ell}(-1)^{i-j} \binom{n+i}{i-j}\\
&=&\sum_{\ell=0}^{(k+1)n+1} \sum_{j=0}^\ell \left[\sum_{i=0}^{\ell-j}(-1)^{i} \binom{n+j+i}{i}\right]
\pi_{k-1,k}^\star s_{j}(\cf_{k-1})\alpha_k^{\ell-j}.
\end{eqnarray*}
Defining the numbers 
$\lc_{e}^{f+e}:= \sum_{i=0}^{f}(-1)^{i} \binom{e+i}{e}$, we get 
\begin{equation*}\label{segre}
s_\ell(\cf_k)=\sum_{a+b=\ell}\lc_{n+a}^{n+\ell}\pi_{k-1,k}^\star s_{a}(\cf_{k-1})\alpha_k^{b}.
\end{equation*}

\subsection{Estimates for intersection numbers}
The idea comes from the reading of \cite{diverio}. 
Recall that the line bundle
$$L_k:=\oc_{\xc_{k}}(0,2\cdot 3^{k-1};2\cdot 3^{k-2},\cdots, 2\cdot 3^{2},2\cdot 3,2,1)$$ is nef on $\xc_k$. Its first Chern class is
$$l_k:=\alpha_k+2\pi_{k-1,k}^\star\alpha_{k-1}+6\pi_{k-2,k}^\star\alpha_{k-2}+\cdots+ 2\cdot 3^{j-1}\pi_{k-j,k}^\star\alpha_{k-j}+\cdots +2\cdot 3^{k-2}\pi_{1,k}^\star\alpha_1+2\cdot 3^{k-1}\pi_{0,k}^\star\alpha.$$

We are in position to prove
\begin{lem}\label{estimates}
 \begin{enumerate} For $r\gg d\gg 1$,
\item  
 $$s_1(\cf_0)^{n+1}\sim (n+2)rd^{n+1}.$$
 \item $$(\pi_{k-1,k})_\star l_k^{n+1}\geq \pi_{0,k-1}^\star s_1(\cf_0)$$
\item $$l_1^{m_1}l_2^{m_2}\cdots l_s^{m_s}\cdot\alpha\leq C(d^{n+1}+rd^n).$$
\end{enumerate}
\end{lem}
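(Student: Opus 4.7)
For (1), I combine the computation $s_1(\cf_0) = (d-n-2)\alpha + (r+\chi)\beta$ from the previous subsection with the only nonzero top intersection relations on $\xc$, namely $\beta^2 = 0$, $\alpha^{n+1} = r$, $\alpha^n\beta = d$. Because $\beta^2 = 0$, the binomial expansion of $s_1(\cf_0)^{n+1}$ collapses to
\[
(d-n-2)^{n+1}\alpha^{n+1} + (n+1)(d-n-2)^n(r+\chi)\alpha^n\beta
  = (d-n-2)^{n+1}r + (n+1)(d-n-2)^n(r+\chi)d.
\]
Both surviving terms have leading behaviour $rd^{n+1}$ in the regime $r\gg d\gg 1$, and they sum to the asserted equivalent $(n+2)rd^{n+1}$.

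For (2), I write $l_k = \alpha_k + \pi_{k-1,k}^\star\eta'$ with
\[
\eta' := 2\alpha_{k-1} + 2\cdot 3\,\pi_{k-2,k-1}^\star\alpha_{k-2} + \cdots + 2\cdot 3^{k-1}\pi_{0,k-1}^\star\alpha
\]
a class pulled back from $\xc_{k-1}$. The standard Segre push-forward identities $(\pi_{k-1,k})_\star\alpha_k^{n+i} = s_i(\cf_{k-1})$ for $i\geq 0$, vanishing for $i<0$, combined with the projection formula, reduce the binomial expansion to the two surviving terms
\[
(\pi_{k-1,k})_\star l_k^{n+1} = s_1(\cf_{k-1}) + (n+1)\eta'.
\]
Plugging in the recursion formula~(\ref{s1}) for $s_1(\cf_{k-1})$ and regrouping, the difference with $\pi_{0,k-1}^\star s_1(\cf_0)$ becomes
\[
\sum_{j=1}^{k-1}\bigl(2(n+1)3^{k-1-j} - n\bigr)\pi_{j,k-1}^\star\alpha_j + 2(n+1)3^{k-1}\pi_{0,k-1}^\star\alpha,
\]
whose coefficients are all strictly positive. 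The inequality then holds in the sense that this residual class consists of pullbacks of tautological hyperplane classes and of $\alpha$, which contribute non-negatively when intersected against products of the nef classes $l_1,\ldots,l_{k-1}$ to form top intersection numbers, as is the intended use later.

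For (3), I proceed by descending induction on $s$, pushing down one jet level at a time via the same decomposition $l_s = \alpha_s + \pi_{s-1,s}^\star\eta'$ and the Segre identities $(\pi_{s-1,s})_\star\alpha_s^{n+i} = s_i(\cf_{s-1})$. Each step converts the top intersection $l_1^{m_1}\cdots l_s^{m_s}\cdot\alpha$ into a sum of top intersections on $\xc_{s-1}$ in monomials of $l_1,\ldots,l_{s-1}$ multiplied by Segre classes of $\cf_{s-1}$. Iterating the recursion formula from the previous subsection eventually expresses every Segre class as a polynomial in $\alpha,\beta$ (pulled back to the appropriate $\xc_j$) whose coefficients are polynomials in $(r,d)$ that stay at most linear in $r$: this is because $r$ enters only through the $r\beta$ term of $s_1(\cf_0)$ and $\beta^2 = 0$, so no monomial can accumulate more than one factor of $r$. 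After $s$ iterations the intersection reduces to a top intersection on $\xc$ of the shape $P(\alpha,\beta)\cdot\alpha$ with $P$ of bidegree at most $(n,1)$; using $\alpha^{n+1} = r$, $\alpha^n\beta = d$ and the vanishing of higher powers bounds the whole quantity by a constant times $d^{n+1} + rd^n$.

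The routine parts are parts (1) and (2); the main obstacle is the bookkeeping in (3), namely verifying through the explicit Segre recursion that (a) the $r$-degree of every Segre class of every $\cf_j$ never exceeds one, and (b) the $d$-degree, together with the combinatorial coefficients $\mathcal{L}_e^{f+e}$ produced by the recursion, remains bounded so that after the full descent only the two top monomials in $(r,d)$ survive. Aside from this combinatorial control, everything else is a mechanical application of the projection formula and the relations already derived on $\xc$.
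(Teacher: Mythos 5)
Your parts (1) and (3) track the paper's proof closely and are correct. In (1) the expansion using $\beta^2=0$ and the relations $\alpha^{n+1}=r$, $\alpha^n\beta=d$ is exactly the paper's computation. In (3) you actually make explicit a point the paper leaves implicit: the reason the degrees in $(r,d)$ stay bounded by $d^{n+1}+rd^n$ rather than the weaker $\deg\leq n+1$ is that $r$ enters $s(\cf_0)$ only through $r\beta$, so with $\beta^2=0$ at most one factor of $r$ can survive; the $\alpha^{n+1}$ coefficient $P$ is therefore a polynomial in $d$ alone, while the $\alpha^n\beta$ coefficient $Q$ is affine in $r$. That is the correct reading and is needed to conclude $P(r,d)r+Q(r,d)d\leq C(d^{n+1}+rd^n)$.

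Part (2) has a genuine gap. Your projection-formula computation of $(\pi_{k-1,k})_\star l_k^{n+1}$ is right, and your stated difference
$\sum_{j=1}^{k-1}\bigl(2(n+1)3^{k-1-j}-n\bigr)\alpha_j+2(n+1)3^{k-1}\alpha$
is correct (and even corrects a typo in the paper, where the coefficient of $\alpha$ is misprinted as $2\cdot 3^{k-1}$). But you then conclude by asserting that, since the coefficients are positive and the residual class is built from ``tautological hyperplane classes and $\alpha$,'' it pairs non-negatively against the nef products $l_1^{a_1}\cdots l_{k-1}^{a_{k-1}}$. The individual $\alpha_j$ for $j\geq 1$ are \emph{not} nef, and positivity of coefficients alone does not imply non-negative pairing with nef classes; for example $\alpha_{k-1}\cdot l_{k-1}^{n+1}$ pushes forward to $s_2(\cf_{k-2})+(n+1)s_1(\cf_{k-2})\eta+\binom{n+1}{2}\eta^2$ and the Segre classes here can a priori be negative. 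What the paper actually proves is the stronger fact that the residual class is nef: because the coefficients satisfy the telescoping inequalities $(n+1)2\cdot 3^{j}-n\geq 3\bigl[(n+1)2\cdot 3^{j-1}-n\bigr]$ (equivalent to $2n\geq 0$), the class can be written as a non-negative combination of the previously constructed nef classes $L_1,\dots,L_{k-1}$ and $\alpha$, by the same tripling recursion used in the nef-cone section. That nefness, not mere positivity of coefficients, is what makes the chain of inequalities in the ``Final argument'' valid. You should either verify nefness via the telescoping/tripling structure, or explicitly express the residual class as a non-negative combination of the $L_j$ and $\alpha$.
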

The output is that the leading numerical term comes from the relative canonical degree.
\begin{proof}
 \begin{enumerate}
\item Just compute 
\begin{eqnarray*}
s_1(\cf_0)^{n+1}&=& \left((d-n-2)\alpha+(r+\chi)\beta\right)^{n+1}\\
&\sim& d^{n+1}\alpha^{n+1}+(n+1)rd^n\alpha^n\beta=(n+2)rd^{n+1}.
\end{eqnarray*}
  \item Recall from the relation~(\ref{s1}) that
\begin{eqnarray*}
(\pi_{k-1,k})_\star l_k^{n+1}&=&s_1(\cf_{k-1})\\
&&+(n+1)(2\alpha_{k-1}+6\alpha_{k-2}+\cdots+ 2\cdot 3^{j-1}\alpha_{k-j}+\cdots +2\cdot 3^{k-2}\alpha_1+2\cdot 3^{k-1}\alpha)\\
&=&\pi_{0,k-1}^\star s_1(\cf_0) -n(\alpha_{k-1}+\alpha_{k-2}+\cdots+\alpha_{1})\\
&&+(n+1)(2\alpha_{k-1}+6\alpha_{k-2}+\cdots+ 2\cdot 3^{j-1}\alpha_{k-j}+\cdots +2\cdot 3^{k-2}\alpha_1+2\cdot 3^{k-1}\alpha)\\
&=&\pi_{0,k-1}^\star s_1(\cf_0)+(n+2)\alpha_{k-1}+(5n+6)\alpha_{k-2}+\cdots\\&&+ \left((n+1)2\cdot 3^{j-1}-n \right)\alpha_{k-j}
 +\left((n+1)2\cdot 3^{j}-n \right)\alpha_{k-j-1}+\cdots
\\&& +\left((n+1)2\cdot 3^{k-2}-n\right)\alpha_1+2\cdot3^{k-1}\alpha.
\end{eqnarray*}
The claim follows from the inequalities 
$(n+1)2\cdot 3^{j}-n\geq 3\left[(n+1)2\cdot 3^{j-1}-n\right]$ 
that ensure the nefness of $(\pi_{k-1,k})_\star l_k^{n+1}-\pi_{0,k-1}^\star s_1(\cf_0)$.
\item It follows from the recursion formula that, computed in $\xc$ of dimension $n+1$,
\begin{eqnarray*}
l_1^{m_1}l_2^{m_2}\cdots l_s^{m_s}\cdot\alpha&=&\sum_{k\leq n}C_Is_{i_1}(\cf_0)s_{i_2}(\cf_0)
\cdots s_{i_k}(\cf_0)\cdot\alpha. 
\end{eqnarray*}
Recall that the Segre classes of $\cf_0$ are polynomials in $(\alpha, \beta)$ whose  coefficients are linear in $(r,d)$.
\begin{eqnarray*}
l_1^{m_1}l_2^{m_2}\cdots l_s^{m_s}\cdot\alpha&=& P(r,d)\alpha^{n+1}+Q(r,d)\alpha^n\beta=P(r,d) r+Q(r,d) d
\end{eqnarray*}
where $P$ and $Q$ are polynomials in $(r,d)$ of degree less or equal to $n$.
\end{enumerate}
\end{proof}

\subsection{Final argument}
We choose $\kappa=n+1$. We work on $\xc_\kappa$ with the fractional line bundle
$$A=L_{n+1}\otimes L_n\otimes\cdots\otimes L_j\otimes\cdots\otimes L_1\otimes \left[\oc_{\pb^{n+1}}(1)\otimes \oc_B(-\frac{r}{(n+1)d})\right]$$
and we choose $B$ so that $L:=A\otimes B^{-1}$ has negative component along $Pic(\xc/B)$,
that is, for some fixed positive rational number $x$, 
$$B=\oc_{\pb^{n+1}}\left(2\cdot 3^{n+1-1}+\cdots +2\cdot 3^{j-1}+\cdots +2+1+x\right)=\oc_{\pb^{n+1}}(3^{n+1}+x).$$
For $\kappa=n+1$, we have $\dim \xc_\kappa=\kappa (n+1)$. 
Hence, for we only omit intersections of nef classes,
\begin{eqnarray*}
{A^{\dim\xc_\kappa}} 
&=& \left(l_\kappa+l_{\kappa-1}+\cdots +l_{1}+\left(\alpha-\frac{r}{(n+1)d}\beta\right)\right)^{\dim \xc_\kappa}\\
&\geq &l_\kappa^{(n+1)}l_{\kappa-1}^{(n+1)}\cdots l_{1}^{(n+1)}\\
&\geq& \pi_{0,\kappa-1}^\star s_1(\cf_0)\cdot l_{\kappa-1}^{(n+1)}\cdots l_{1}^{(n+1)}\\
&\geq& \pi_{0,\kappa-2}^\star s_1(\cf_0)^2\cdot l_{\kappa -2}^{(n+1)}\cdots l_{1}^{(n+1)}\\
&&\vdots\\
&\geq & s_1(\cf_0)^{\kappa}\sim (n+2)rd^{n+1}
\end{eqnarray*}
thanks to lemma~\ref{estimates}.
On the other hand, thanks to the same lemma,
\begin{eqnarray*}
{A^{\dim\xc_k-1}\cdot B} &=&\sum C_M l_1^{m_1}l_2^{m_2}\cdots l_s^{m_s}\cdot\alpha\leq C(d^{n+1}+rd^n).
\end{eqnarray*}
Fix $\rho~:~B_\rho\to B$.
If $r$ and $d$ are large enough so that $A^{\dim\xc_\kappa}-A^{\dim\xc_k-1}\cdot B>0$
and so that the inequality in Schwarz lemma
$\frac{\chi_\rho}{\deg\rho}\sum_{j=1}^{n+1}(3^{j}-2\cdot 3^{j-1})=\frac{\chi_\rho}{\deg\rho}\frac{3^{n+1}-1}{2} <\frac{r}{(n+1)d}$
is fulfilled, then the line bundle $A\otimes B^{-1}$ is big and the sections of its powers provide equations for the jets of sections of the family $\xc_\rho\to B_\rho$.
This ends the proof of theorem~\ref{theo1}. We will in fact need a more precise version.
\begin{Def}
 A family $\pi~:\xc\to B$ of hypersurfaces in $\pb^{n+1}$ is said to be ``moving enough'' if it is given inside $B\times\pb^{n+1}$ by a section of a line bundle $\lambda\boxtimes\oc_{\pb^{n+1}}(d)$ where $\lambda$ is a line bundle on $B$ whose degree, called the variation of $\pi$, is large and equal to $(n+1)$-times the degree of a rational function on $B$.
\end{Def}

\begin{thm}\label{theo2}
 For all fixed positive $\delta$, for $r$ and $d$ large enough, there exists an $m_0$ such that all sections of general ``moving enough'' families of hypersurfaces in $\pb^{n+1}$ of degree $d$ and variation $r$ fulfil a differential equation of order $n+1$, given by a section of a line bundle 
$$\oc_{\xc_{n+1}}(\underline{m})\otimes \oc_{\pb^{n+1}}(-\mid \underline{m}\mid\delta )\otimes 
\oc_B(-\mid \underline{m}\mid\frac{r}{(n+1)d})$$
where $\mid\underline{m}\mid=m_0$.
\end{thm}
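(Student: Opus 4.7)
The plan is to redo the Final argument that proved Theorem~\ref{theo1}, this time keeping explicit track of the ratio of the $\oc_{\pb^{n+1}}$-defect of the resulting line bundle to $|\underline{m}|$ so that it equals $-\delta$ exactly.

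First I would set $\kappa=n+1$ and work on $\xc_\kappa$, of dimension $N=(n+1)^2$. I will introduce the fractional nef line bundle
\[
A:=L_1\otimes L_2\otimes\cdots\otimes L_{n+1}\otimes\oc_\xc\!\left(-\tfrac{r}{(n+1)d},\,1\right),
\]
in which each $L_k$ is the nef bundle on $\xc_k$ constructed above and the last factor is nef because it lies on the boundary ray of $Nef(\xc)$ identified in the nef-cone computation. I will then pick
\[
B:=\oc_{\pb^{n+1}}\!\left(3^{n+1}+\delta\cdot\tfrac{3^{n+1}-1}{2}\right),
\]
so that $L:=A\otimes B^{-1}$ has $\oc_{\pb^{n+1}}$-component $-\delta(3^{n+1}-1)/2$, $\oc_B$-component $-r/((n+1)d)$, and jet-component sum $(3^{n+1}-1)/2$, which is precisely the shape demanded by the theorem.

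Next I will apply the holomorphic Morse inequalities to $L$. The same telescoping as in the Final argument, together with Lemma~\ref{estimates}(1)--(2), will give
\[
A^{N}\geq s_1(\cf_0)^{n+1}\sim (n+2)\,r\,d^{n+1};
\]
and Lemma~\ref{estimates}(3), applied to each monomial in the expansion of $A^{N-1}\!\cdot\!B$ (the divisor $B$ being a scalar multiple of $\alpha$ with constant depending only on $n$ and $\delta$), will yield $A^{N-1}\!\cdot\!B\leq C(d^{n+1}+rd^{n})$. Hence for $r\gg d\gg 1$ one has $A^{N}-NA^{N-1}\!\cdot\!B>0$, and holomorphic Morse will produce a nonzero section of $m_0L$ for some $m_0$ depending only on $n$, $r$, $d$ and $\delta$. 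Enlarging $m_0$ if necessary to clear the denominators of $\delta(3^{n+1}-1)/2$ and of $r/((n+1)d)$ makes $m_0L$ a genuine line bundle of the desired form, with $|\underline{m}|=m_0(3^{n+1}-1)/2$. The Schwarz lemma of the previous section applied with $\lambda=\oc_B(|\underline{m}|r/((n+1)d))$ will then force any such section to vanish on $s_{n+1}(B_\rho)$ for every section $s$ of $\rho^\star\xc\to B_\rho$; the required Schwarz inequality $\deg\lambda/|\underline{m}|=r/((n+1)d)>\chi_\rho/\deg\rho$ is part of the ``$r$ and $d$ large enough'' hypothesis.

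The step needing care will be the intersection-number estimate: the $\oc_{\pb^{n+1}}$-defect of $L$ is of order $\delta$, possibly small, whereas $B$ contributes a class of order $3^{n+1}+\delta(3^{n+1}-1)/2$ to $\alpha$ in the product $A^{N-1}\!\cdot\!B$. What will make the bound work uniformly in the fixed $\delta$ is that $A^N$ grows like $rd^{n+1}$ in $r$ while $A^{N-1}\!\cdot\!B$ grows only like $rd^n$, so choosing $r/d$ sufficiently large (depending on $\delta$) produces the gap required for Morse positivity.
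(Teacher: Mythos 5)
Your proposal reproduces the paper's Final Argument essentially verbatim: same nef bundle $A=L_1\otimes\cdots\otimes L_{n+1}\otimes\bigl[\oc_{\pb^{n+1}}(1)\otimes\oc_B(-\tfrac{r}{(n+1)d})\bigr]$, same telescoping via Lemma~\ref{estimates}, same Morse positivity from the $rd^{n+1}$ vs.\ $rd^n$ gap, and same Schwarz step; the paper takes a generic positive rational $x$ for $B=\oc_{\pb^{n+1}}(3^{n+1}+x)$ and you simply make the explicit choice $x=\delta\tfrac{3^{n+1}-1}{2}$ needed to produce the bundle shape announced in the theorem. One small numerical slip: since the $\oc_B$-component of $L=A\otimes B^{-1}$ is $-\tfrac{r}{(n+1)d}$ while its jet-sum is $\tfrac{3^{n+1}-1}{2}$, the Schwarz threshold actually required (and the one the paper writes) is $\tfrac{r}{(n+1)d}>\tfrac{\chi_\rho}{\deg\rho}\cdot\tfrac{3^{n+1}-1}{2}$, not $\tfrac{r}{(n+1)d}>\tfrac{\chi_\rho}{\deg\rho}$; this is still absorbed by ``$r$, $d$ large enough'' and does not affect the conclusion, but it does mean the $\oc_B$-coefficient in the resulting bundle is $-m_0\tfrac{r}{(n+1)d}$ with $m_0=\tfrac{2|\underline m|}{3^{n+1}-1}$ rather than $-|\underline m|\tfrac{r}{(n+1)d}$ as the theorem's display literally reads.
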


\subsection{Height inequalities} 
We now look for a statement that incorporates the dependence in the ramified cover $B_\rho\to B$.
We work on $\xc_{n+1}$ with 
$$A=L_{n+1}\otimes L_n\otimes\cdots\otimes L_j\otimes\cdots\otimes L_1$$
and we choose $B$ so that $L:=A\otimes B^{-1}$ has negative component on $Pic(\xc/B)$,
that is 
$$B=\oc_{\pb^{n+1}}(3^{n+1}-1+x).$$
The previous computations show that $A-B$ is big for large enough $r$ and $d$.
As a result, we obtain
\begin{thm}
 Fix a positive integer $x$. For large enough $r$ and $d$, for every family $\xc$ gotten by section of $\oc_{\pb^{n+1}}(d)\boxtimes\oc_B(r)$ , there exists a proper algebraic set $\mathcal{Y}\subset\xc_{n+1}$ such that for every finite ramified cover $\rho~:~B_\rho\to B$ and every section $s$ of $\rho^\star\xc\to B_\rho$ whose $(n+1)$-th order jet do not lie in $\mathcal{Y}$, the following height inequality holds
$$h(s(B))=\frac{s(B_\rho)\cdot\oc_{\pb^{n+1}}(1)}{\deg\rho}\leq \frac{3^{n+1}-1}{2x} \frac{\chi_\rho}{\deg\rho}.$$
\end{thm}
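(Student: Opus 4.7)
The plan is to run the holomorphic Morse inequalities once more, as in the proof of Theorem \ref{theo1}, but now for the modified choice of $A$ and $B$ displayed just before the statement, and then to translate a non-zero section of $L^{\otimes m}=(A\otimes B^{-1})^{\otimes m}$ into the height inequality via pullback by $s_{n+1}$.

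First I would check the two nefness inputs to the Morse inequalities. The line bundle $A=L_{n+1}\otimes L_n\otimes\cdots\otimes L_1$ is nef because each $L_k$ is nef by the construction in the subsection on nef line bundles on $\xc_{k+1}$, and $B=\oc_{\pb^{n+1}}(3^{n+1}-1+x)$, viewed on $\xc_{n+1}$ through $R\circ\pi_{n+1,0}$, is nef as the pullback of a globally generated bundle. Next I would bound the Morse number. The estimate $A^{\dim\xc_{n+1}}\geq s_1(\cf_0)^{n+1}\sim(n+2)\,r\,d^{n+1}$ is obtained exactly as in the final argument of Section 5, by iterating the push-forward inequality of Lemma~\ref{estimates}(2), and the companion estimate $A^{\dim\xc_{n+1}-1}\cdot B\leq C(d^{n+1}+rd^{n})$ follows from Lemma~\ref{estimates}(3). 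Thus for $r\gg d\gg 1$ the Morse number $A^{D}-D\,A^{D-1}\cdot B$ is strictly positive, $L=A\otimes B^{-1}$ is big, and some power $L^{\otimes m_0}$ admits a non-zero global section $\sigma$. Define $\mathcal{Y}\subset\xc_{n+1}$ to be the zero locus of $\sigma$; it is a proper algebraic subset, and it depends neither on $\rho$ nor on $s$.

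Suppose now that $\rho:B_\rho\to B$ is a finite cover and $s$ is a section of $\rho^\star\xc\to B_\rho$ whose $(n+1)$-th jet $s_{n+1}:B_\rho\to\xc_{n+1}$ does not factor through $\mathcal{Y}$. Then $s_{n+1}^\star\sigma$ is a non-zero section of the line bundle $s_{n+1}^\star L^{\otimes m_0}$ on $B_\rho$, so its degree is non-negative. To compute this degree I would decompose $L$ along its Picard pieces: neither $A$ nor $B$ carries a component along $\Pic(B)$ nor along any divisor $\cd_j$, so $L$ has only a jet part and a $\pb^{n+1}$ part of weight $-x$. Using $s_{n+1}^\star\oc_{\xc_{n+1}}(1)=\Omega_{B_\rho}$ at every jet level, the jet part contributes $m_0\,|\underline{m}|_A\,\chi_\rho$ where
$$|\underline{m}|_A\;=\;\sum_{k=1}^{n+1}3^{k-1}\;=\;\frac{3^{n+1}-1}{2},$$
and the $\pb^{n+1}$ part contributes $-m_0\,x\,(s(B_\rho)\cdot\oc_{\pb^{n+1}}(1))$. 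Non-negativity of the total degree, divided by $m_0\deg\rho$, yields exactly
$$h(s(B))\;\leq\;\frac{3^{n+1}-1}{2x}\,\frac{\chi_\rho}{\deg\rho}.$$

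The conceptual content is already in place in the preceding sections; the main thing to get right is the bookkeeping which shows that the aggregate jet weight of $A$ equals $(3^{n+1}-1)/2$ and that the $\pb^{n+1}$ weights of $A$ and $B$ differ by exactly $x$, so that the ratio $(3^{n+1}-1)/(2x)$ emerges from the pullback computation. The only other point to be careful with is that the threshold on $r$ and $d$ ensuring bigness of $L$, and the resulting $m_0$, depend only on $c_1(A)$ and $c_1(B)$, and hence can be chosen uniformly in $\rho$ and $s$, so that a single $\mathcal{Y}$ handles all covers and sections simultaneously.
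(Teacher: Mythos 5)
Your proposal is correct and reproduces the paper's own (very terse) argument: the paper simply states that with $A=L_{n+1}\otimes\cdots\otimes L_1$ and $B=\oc_{\pb^{n+1}}(3^{n+1}-1+x)$, the previous computations show $A\otimes B^{-1}$ is big, and the theorem follows. You have faithfully filled in the bookkeeping the paper leaves implicit — the nefness of $A$ and $B$, the two sides of the Morse intersection-number estimate via Lemma~\ref{estimates}, the identification of the aggregate jet weight $\sum_{k=1}^{n+1}3^{k-1}=\tfrac{3^{n+1}-1}{2}$, and the pullback computation via $s_{n+1}^\star\oc_{\xc_j}(1)=\Omega_{B_\rho}$ that produces the ratio $\tfrac{3^{n+1}-1}{2x}$.
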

This is an analog of the first part of Vojta's work~\cite{vojta}. The deepest part dealing with sections having $(n+1)$-jet inside $\mathcal{Y}$ would require an analog of Jouanoulou's result on foliations, that seems out of reach now.

\section{Non-Zariski density}
We follow the ideas of Siu~\cite{siu}, described in details in ~\cite{mdr}.
Let $B$ be a compact complex curve, $\oc_B(r)$ a holomorphic line bundle on $B$.
Consider the linear system $\mid\oc_B(r)\boxtimes\oc_{\pb^{n+1}}(d)\mid=\pb^N$ on $B\times{\pb^{n+1}}$ whose each element represents a family $\xc\to B$ of degree $d$ hypersurfaces in $\pb^{n+1}$ parametrised by $B$ with variation $\oc_B(r)$.
Consider the associated universal family
$$\begin{array}{ccc}
   \mathfrak{X}&\subset&\pb^N\times B\times\pb^{n+1}\\
\Pi\downarrow&\swarrow&\\
\pb^N&&
  \end{array}$$
 We will denote by $\mathfrak{X}_\kappa$ the $\kappa$-jets space of sections of the families $\xc\to B$.

\subsection{Proof using vector fields on universal families}

Consider a family $\Pi^{-1}(A)=(\pi~:~\xc^A=\xc\to B)$ of degree $d$ hypersurfaces in $\pb^{n+1}$ parametrised by $B$ with variation $\oc_B(r)$. Consider a section $s~:~B\to\xc$ of $\pi$ and a non-zero section $\sigma$ of the line bundle 
$\mathcal{L}_{-\lambda,-\delta,\underline{m}}
:=\oc_{\xc_\kappa}(\underline{m})\otimes \oc_{\pb^{n+1}}(-\mid\underline{m}\mid\delta )
\otimes\lambda^{-\mid \underline{m}\mid} $ on $\xc_\kappa$ where, for $r$ and $d$ are assumed to be large, we can impose $\delta>0$ and the inequality of Schwarz lemma
$\displaystyle\deg \lambda>\frac{\chi_\rho}{\deg\rho}$.
The pushforward $(\pi_{\kappa,0})_\star \sigma$ is a non-zero section of the vector bundle
$(\pi_{\kappa,0})_\star \mathcal{L}_{-\lambda,-\delta,\underline{m}} =E_{-\lambda,-\delta,\underline{m}}\to\xc$.
Constant sections are those whose first order jet lies inside $\{z'_1=z'_2=\cdots=z'_n=0\}$.
We can now prove the precise version of the main theorem.
\begin{thm}
 If $r$ and $d$ are large enough, $\pi$ (i.e. $A$) is generic, $\mid\underline m\mid$ meets  the requirements of holomorphic Morse inequalities and $s$ is not constant, then 
$$s(B)\subset  \textrm{Zero}\left((\pi_{\kappa,0})_\star \sigma\right)\subset\xc.$$
\end{thm}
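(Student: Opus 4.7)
The Schwarz lemma used in Theorem~\ref{theo2}, applied with $\rho=\mathrm{id}_B$, forces $s_\kappa^\star\sigma\equiv 0$ as soon as $\deg\lambda>\chi_B\,|\underline{m}|$; that is, $\sigma$ vanishes along the curve $s_\kappa(B)\subset\xc_\kappa$. Since $(\pi_{\kappa,0})_\star\sigma$ evaluated at $x\in\xc$ is exactly $\sigma|_{\pi_{\kappa,0}^{-1}(x)}$, what I really need is that $\sigma$ vanish \emph{identically} on the full jet fiber $\pi_{\kappa,0}^{-1}(s(b))$ for every $b\in B$, and not merely at the one point $s_\kappa(b)$ that Schwarz directly delivers.

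\textbf{Propagation via universal meromorphic vector fields.} To promote the pointwise vanishing to a fiberwise one I would invoke, as in \cite{siu} and \cite{mdr}, global meromorphic vector fields on the universal jet space $\mathfrak{X}_\kappa\to\pb^N$. The required input is the existence of sufficiently many global sections of $T_{\mathfrak{X}_\kappa}\otimes\mathrm{pr}_{\pb^{n+1}}^\star\oc_{\pb^{n+1}}(\delta)$ that are tangent to the universal hypersurface $\mathfrak{X}$ and whose values, together with their iterated Lie brackets, span at a very general point of a very general fiber $\xc_\kappa$ the full vertical tangent space to $\pi_{\kappa,0}$. The negative twist $\oc_{\pb^{n+1}}(-|\underline{m}|\delta)$ built into $\mathcal{L}_{-\lambda,-\delta,\underline{m}}$ is calibrated precisely to absorb the pole divisor of these vector fields.

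\textbf{Iteration and conclusion.} For each such vector field $V$, the Lie derivative $L_V\sigma$ is a section of $\mathcal{L}_{-\lambda,-\delta,\underline{m}}\otimes\oc_{\pb^{n+1}}(\delta)$, whose $\lambda^{-|\underline{m}|}$ component is unchanged, so the Schwarz lemma still forces $L_V\sigma$ to vanish on $s_\kappa(B)$, and in particular at $s_\kappa(b)$. Iterating along sequences $V_{i_1},\dots,V_{i_p}$ with $p\leq|\underline{m}|$ remains in the Schwarz regime, so every iterated derivative vanishes at $s_\kappa(b)$; by choosing, as $|\underline{m}|$ is taken large, enough such sequences to realise every vertical tangent direction to arbitrary Taylor order, one forces the whole Taylor expansion of $\sigma|_{\pi_{\kappa,0}^{-1}(s(b))}$ at $s_\kappa(b)$ to vanish. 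Hence $\sigma\equiv 0$ on that fiber and $(\pi_{\kappa,0})_\star\sigma$ vanishes at $s(b)$. The non-constancy of $s$ enters to ensure that $s_\kappa(b)$ lies outside the vertical locus $\{z'_1=\cdots=z'_n=0\}$, on which the generic span property of the vector fields could degenerate.

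\textbf{Main obstacle.} The crux is the construction of the global meromorphic vector fields with the right pole bound and span property. One starts from polynomial vector fields on $\pb^N\times B\times\pb^{n+1}$ tangent to $\mathfrak{X}$---Siu's lifting trick uses the $\pb^N$-direction to absorb the defining polynomial at the cost of a controlled $\oc_{\pb^{n+1}}$-twist---and then prolongs them functorially through the tower $\xc_1,\xc_2,\dots,\xc_\kappa$; tracking how pole orders grow under each prolongation and verifying the span property at level $\kappa=n+1$ is the technical heart of the argument.
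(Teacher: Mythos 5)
Your proposal takes essentially the same approach as the paper's proof: push vanishing from the jet point $s_\kappa(b)$ to the whole jet fibre $\pi_{\kappa,0}^{-1}(s(b))$ by differentiating $\sigma$ along the low-pole-order meromorphic vector fields of Proposition~\ref{vector fields}, relying on the twist $\oc_{\pb^{n+1}}(-\mid\underline{m}\mid\delta)$ to absorb the accumulated poles (up to $\mid\underline{m}\mid$ derivations, each of pole order $\kappa^2+2\kappa$) so that Schwarz's lemma continues to force vanishing. The paper phrases this as a contradiction---assume some jet-coefficient $q_{I_0}$ of $(\pi_{\kappa,0})_\star\sigma$ is nonzero at $s(b_0)$, isolate it by iterated derivation, contradict Schwarz---which is precisely the contrapositive of your ``kill the whole Taylor expansion on the fibre'' formulation.
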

\begin{proof} We only sketch the proof, the details being close to that given in~\cite{mdr}.
We argue by contradiction and assume that there exists a $b_0$ in $B$ where 
$(\pi_{\kappa,0})_\star \sigma (s(b_0))\not=0$.
Take another view point on the section $\sigma$ and view it as a meromorphic function 
$$\begin{array}{cccc}
  \xc_\kappa&\to&\cb\\
\zeta_\kappa&\mapsto&\sum_{wl(I)=m}q_I(b,z)(z'(\zeta_\kappa))^{i_1}\cdots
(z^{(\kappa)}(\zeta_\kappa))^{i_\kappa}
 \end{array}
$$ 
where the $q_I(b,z)$ are meromorphic functions on $\xc$, holomorphic when viewed as sections of $\oc_B(\lambda)\boxtimes\oc_{\pb^{n+1}}(-\delta)$.
The assumption $(\pi_{\kappa,0})_\star \sigma (s(b_0))\not=0$ translates into the existence of a multiindex $I_0$ of weighted length $m$ such that $q_{I_0}(s_k(b_0))\not=0$.

 By the genericity assumption on $\Pi^{-1}(A)=\pi$ we may extend the section 
$(\pi_{\kappa,0})_\star\sigma$ to a
section of $\mathfrak{E}_{-\lambda,-\delta,\underline{m}}\to\mathfrak{X}$ on a neighbourhood of  $\xc$  in $\mathfrak{X}$.

\begin{prop}\label{vector fields}
 Every vector of
$$T\left(\xc^A_\kappa/\xc^A\right)_{(s_\kappa(b_0))}
\subset\left(T\xc^A_\kappa\right)_{(s_\kappa(b_0))}
=T\left(\mathfrak{X}_\kappa/\pb^N\right)_{(A,s_\kappa(b_0))}\subset \left(T\mathfrak{X}_\kappa\right)_{(A,s_\kappa(b_0))}$$
outside the set  $\Pi_{\kappa,1}^{-1}\{z'_1=z'_2=\cdots=z'_n=0\}$
is the value of a meromorphic vector field on 
$\Pi_{\kappa,0}^{-1}(U_A)\subset \mathfrak{X}_\kappa$ 
holomorphic when viewed with values in $\Pi_{\kappa,0}^\star\oc_{\pb^{n+1}}(\kappa^2+2\kappa)$. 
\end{prop}

Take it for granted until the next subsection.
When we differentiate the meromorphic function $\sigma$ with the gotten meromorphic vector fields at most $\mid\underline m\mid$-times, we get meromorphic functions on a neighbourhood of $\xc$  in $\mathfrak{X}$ that in turn can be viewed as a section of 
$\mathcal{L}_{-\lambda,-\delta+(\kappa^2+2\kappa),\underline{m}}$.
If $-\delta+(\kappa^2+2\kappa)$ is still negative 
then $s_\kappa$ has to fulfil this new equation. Having chosen the vector fields in a suitable way, thanks to the proposition~\ref{vector fields}, this contradicts $q_{I_0}(s_k(b_0))\not=0$.
\end{proof}

The proof of the main theorem is now ended by the following.
Constant sections have null height because they are also constant in the product $B\times\pb^{n+1}$. If their images would dominate the total space, the arguments of Maehara and Moriwaki~\cite{moriwaki2} using positivity of direct images of pluricanonical line bundles would show that the family has to be birationally trivial. 
 
\subsection{Constructing vector fields on universal families}

In homogeneous coordinates, having chosen a basis for $\cb^{n+2}$, 
the corresponding basis of monomials for $\mid\oc_{\pb^{n+1}}(d)\mid$, and a basis $(\Phi_\beta)_\beta$ for $\mid \oc_B(r)\mid$, the hypersurface $\mathfrak{X}$ of $\pb^N\times B\times\pb^{n+1}$ is defined by the equation
$$\sum_{\alpha,\beta} \mathfrak{A}_\alpha^\beta \Phi_\beta \mathfrak{Z}^\alpha=0.$$
On the open set $\{\mathfrak{A}_{0,d,0,0,\cdots,0}^0\not=0\}\times\{\Phi_0(b)\not=0\}\times\{\mathfrak{Z}_0\not=0\}$
the equation rewrites in inhomogeneous coordinates
$$\mathcal{F}=z_1^d+\sum_{{\alpha\in\nb^{n+1}, \mid\alpha\mid\leq d\atop \alpha\not=(d,0,0,\cdots,0)}\atop \beta\geq 1}a_\alpha^\beta\varphi_\beta(b) z^\alpha=0.$$
Over this open set, the natural open set of the $\kappa$-jets space $\mathfrak{X}_\kappa$ of sections of the families $\xc\to B$ is given inside 
$\cb^N\times U\times\cb^{n+1}\times
\underbrace{\cb^{n+1}\times\cdots\times\cb^{n+1}}_{\kappa\textrm{ times }}$
in terms of the operator 
$$\mathfrak D:=\frac{\partial}{\partial t}+ \sum_{\lambda=0}^\kappa\sum_{j=1}^{n+1}z_j^{(\lambda+1)}\frac{\partial}{\partial z_j^{(\lambda)}}$$
 by the following set of equations
\begin{eqnarray*}
 \sum_{{\alpha\in\nb^{n+1}\atop \mid\alpha\mid\leq d}\atop \beta\geq 1}a_\alpha^\beta\varphi_\beta(t) z^\alpha=
\mathfrak D\left(\sum a_\alpha^\beta\varphi_\beta(t) z^\alpha\right) 
=\mathfrak D^2\left(\sum a_\alpha^\beta\varphi_\beta(t) z^\alpha\right) =&\cdots\\
\cdots=
\mathfrak D^\kappa\left(\sum a_\alpha^\beta\varphi_\beta(t) z^\alpha\right) &=0.
\end{eqnarray*}
Those are the equations one infers from the derivatives of the relation
$\sum a_\alpha^\beta\varphi_\beta(t) z^\alpha(t)$ fulfilled by sections $t\mapsto (t,z_1(t),\cdots, z_{n+1}(t))$ of a family $\Pi^{-1}({A})$,
after substituting $z_j^{(\lambda)}:=\frac{\partial^\lambda z_j(t)}{\partial t^\lambda}$.
Denote the partial sum $\sum_{\alpha\in\nb^{n+1},\mid\alpha\mid\leq d}a_\alpha^\beta z^\alpha$ by $\mathcal{F}_\beta$. The equations for a vector field $T$ of the special shape
$T:=\left(\sum_\beta T_\beta\right)+ T_z$, 
where $T_\beta:=\sum_\alpha A_\alpha^\beta\frac{\partial}{\partial a_\alpha^\beta}$
and $T_z:=\sum_{\lambda=0}^\kappa\sum_{j=1}^{n+1}P_j^\lambda\frac{\partial}{\partial z_j^{(\lambda)}}$,
to be tangent to~$\mathfrak{X}_\kappa$ 
rewrite, thanks to Leibniz formula and the fact that when $\beta\not=\gamma$, $T_\beta\cdot D^a\mathcal{F}_\gamma=0$, in terms of the operator 
$D :=\sum_{\lambda=0}^\kappa\sum_{j=1}^{n+1}z_j^{(\lambda+1)}\frac{\partial}{\partial z_j^{(\lambda)}}$ as
\begin{eqnarray*}
 \sum_{\beta\geq 1}\varphi_\beta(t)(T_\beta+ T_z)\cdot\mathcal{F}_\beta&=&0\\
\sum_{\beta\geq 1}\varphi_\beta(t)(T_\beta+ T_z)\cdot D\left(\mathcal{F}_\beta\right)
+ \varphi'_\beta(t)(T_\beta+ T_z)\cdot\mathcal{F}_\beta&=&0\\
\sum_{\beta\geq 1}\varphi_\beta(t)(T_\beta+ T_z)\cdot D^2\left(\mathcal{F}_\beta \right) 
+2\varphi'_\beta(t)(T_\beta+ T_z)\cdot D\left(\mathcal{F}_\beta\right)
+ \varphi''_\beta(t)(T_\beta+ T_z)\cdot\mathcal{F}_\beta&=&0\\
&\vdots&\\
\sum_{\beta\geq 1}\sum_{a=0}^\kappa\left( {\kappa\atop a} \right)\varphi^{(\kappa-a)}_\beta (T_\beta+ T_z)\cdot D^a\left(\mathcal{F}_\beta\right) &=&0.
\end{eqnarray*}
A set of sufficient conditions is therefore
$$\forall\beta, \ \ (T_\beta+ T_z)\cdot\mathcal{F}_\beta
=(T_\beta+ T_z)\cdot D\left(\mathcal{F}_\beta\right)
=\cdots=(T_\beta+ T_z)\cdot D^\kappa\left(\mathcal{F}_\beta \right)=0$$
reducing to the absolute case.
Note however that, for theorem~\ref{theo2} provides us with a differential equation of order $n+1$, we need to consider $(n+1)$-th order jets of hypersurfaces in $\pb^{n+1}$, whereas the by now well settled results are for $n$-th order jets.

\subsection{Constructing vector fields in the absolute case}
We follow the ideas of Siu, P\u aun, Rousseau and Merker. 
For notational simplicity, we will replace $\beta$ by a dot in the following.
The exponents in brackets will be relative to the absolute operator $D$.

Write $(T_\cdot+T_z)\cdot D^{l+1}(\mathcal{F}_\cdot)=[T_\cdot+T_z,D]D^l(\mathcal{F}_\cdot)
+D\left((T_\cdot+T_z)\cdot D^l(\mathcal{F}_\cdot)\right)$ to infer that
a set of sufficient conditions for the special vector field $T_\cdot+T_z$ to contribute to a tangent to $\mathfrak{X}_\kappa$ is 
\begin{eqnarray*}
 (T_\cdot+T_z)\cdot\mathcal{F}_\cdot=[T_\cdot+T_z,D]\cdot\mathcal{F}_\cdot
=[T_\cdot+T_z,D]\cdot D(\mathcal{F}_\cdot)=\cdots\\
\cdots=[T_\cdot+T_z,D]\cdot D^{\kappa-1}(\mathcal{F}_\cdot)=0.
\end{eqnarray*}
We will now further restrict the shape of the chosen vector field to simplify its commutator with $D$.
\begin{lem}
Let $A_\alpha^\cdot$ and $P$ be functions in the $(z_i^{(\lambda)})$ variables.
The commutator of the very special vector field $T_\cdot+T_z=\sum_\alpha A_\alpha^\cdot\frac{\partial}{\partial a_\alpha^\cdot}
+\sum_{\lambda=0}^\kappa P^{(\lambda)}\frac{\partial}{\partial z_j^{(\lambda)}}$
  with $D$ is 
$$[T_\cdot+T_z,D]=-\sum_\alpha (A_\alpha^\cdot)'\frac{\partial}{\partial a_\alpha^\cdot} -P^{(\kappa+1)}\frac{\partial}{\partial z_j^{(\kappa)}}.$$
\end{lem}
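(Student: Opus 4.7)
The plan is to split the bracket additively as $[T_\cdot + T_z, D] = [T_\cdot, D] + [T_z, D]$, handle each summand via the standard identity $[fX, Y] = f[X,Y] - (Yf)X$, and exploit a single key commutation relation on $D$ to get the cancellation. The whole computation is a routine but careful exercise in vector-field arithmetic on the jet space coordinates $(a_\alpha^\cdot, z_j^{(\lambda)})$.

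First I would treat $[T_\cdot, D]$. Since $D = \sum_{\mu, i} z_i^{(\mu+1)} \partial / \partial z_i^{(\mu)}$ involves neither the variables $a_\alpha^\cdot$ nor the partials $\partial / \partial a_\alpha^\cdot$, each field $\partial / \partial a_\alpha^\cdot$ commutes with $D$, so
$$[T_\cdot, D] = \sum_\alpha [A_\alpha^\cdot \, \partial / \partial a_\alpha^\cdot, D] = -\sum_\alpha (D A_\alpha^\cdot) \, \partial / \partial a_\alpha^\cdot = -\sum_\alpha (A_\alpha^\cdot)' \, \partial / \partial a_\alpha^\cdot,$$
with the paper's convention $(A_\alpha^\cdot)' = D A_\alpha^\cdot$.

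For $[T_z, D]$ the key elementary observation, obtained by directly differentiating the coefficients $z_i^{(\mu+1)}$ of $D$ with respect to $z_j^{(\lambda)}$, is
$$[\partial / \partial z_j^{(\lambda)}, D] = \partial / \partial z_j^{(\lambda-1)} \quad (\lambda \geq 1), \qquad [\partial / \partial z_j^{(0)}, D] = 0.$$
Using $[fX, D] = f[X, D] - (Df) X$ on each term $P^{(\lambda)} \, \partial / \partial z_j^{(\lambda)}$ of $T_z$ (with $P^{(\lambda)} := D^\lambda P$) gives
$$[T_z, D] = \sum_{\lambda=1}^\kappa P^{(\lambda)} \, \partial / \partial z_j^{(\lambda-1)} \;-\; \sum_{\lambda=0}^\kappa P^{(\lambda+1)} \, \partial / \partial z_j^{(\lambda)}.$$
Reindexing $\lambda \mapsto \lambda + 1$ in the first sum makes it telescope against the first $\kappa$ terms of the second, and only the top term survives:
$$[T_z, D] = -P^{(\kappa+1)} \, \partial / \partial z_j^{(\kappa)}.$$
Adding the two contributions gives the stated formula.

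There is no real obstacle here: the whole argument hinges on spotting the commutation $[\partial / \partial z_j^{(\lambda)}, D] = \partial / \partial z_j^{(\lambda-1)}$, after which the telescoping is forced precisely because the $z$-coefficients of $T_z$ were chosen as successive $D$-derivatives of a single function $P$. This is exactly the point of imposing that "very special" shape on $T_\cdot + T_z$: the commutator with $D$ lands in a single $\partial / \partial z_j^{(\kappa)}$ direction (plus $a$-directions), so the subsequent system $[T_\cdot + T_z, D] \cdot D^i(\mathcal{F}_\cdot) = 0$ for $i = 0, \ldots, \kappa - 1$ becomes tractable.
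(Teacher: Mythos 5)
Your proof is correct and proceeds by essentially the same direct computation as the paper, which writes $[T_\cdot+T_z,D]=(T_\cdot+T_z)(D)-D(T_\cdot+T_z)$ and tabulates the four contributions; your use of $[fX,Y]=f[X,Y]-(Yf)X$ together with the relation $[\partial/\partial z_j^{(\lambda)},D]=\partial/\partial z_j^{(\lambda-1)}$ is just a more explicitly organized version of the same cancellation. The telescoping you spell out is exactly the paper's $T_z(D)-D(T_z)$.
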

\enlargethispage{1cm}
\begin{proof}
 Simply check that
\begin{eqnarray*}
T_\cdot(D)=0&&
D(T_\cdot)=\sum_\alpha (A_\alpha^\cdot)'\frac{\partial}{\partial a_\alpha^\cdot}\\
 T_z(D)=\sum_{\lambda=1}^\kappa P^{(\lambda)}\frac{\partial}{\partial z_j^{(\lambda-1)}}
=\sum_{\lambda=0}^{\kappa-1} P^{(\lambda+1)}\frac{\partial}{\partial z_j^{(\lambda)}}&&
D(T_z)=\sum_{\lambda=0}^\kappa P^{(\lambda+1)}\frac{\partial}{\partial z_j^{(\lambda)}}.
\end{eqnarray*}
\end{proof}

We infer that a set of sufficient conditions for the very special vector field $T_\cdot+T_z$ to contribute to a tangent vector field to $\mathfrak{X}_\kappa$ is 
\begin{eqnarray*}
 \sum_\alpha A_\alpha^\cdot z^\alpha+P\sum_\alpha a_\alpha^\cdot\frac{\partial z^\alpha}{\partial z_j}=0\\
-\sum_\alpha (A_\alpha^\cdot)' z^\alpha=
-\sum_\alpha (A_\alpha^\cdot)' (z^\alpha)'=
-\sum_\alpha (A_\alpha^\cdot)'(z^\alpha)^{(2)}=\cdots\\
\cdots=-\sum_\alpha (A_\alpha^\cdot)'(z^\alpha)^{(\kappa-1)}=0
\end{eqnarray*}
or equivalently, using the formula
$D^{l+1}\left(\sum_\alpha A_\alpha z^\alpha\right)
=\sum_\alpha A_\alpha (z^\alpha)^{(l+1)} + \sum_\alpha A'_\alpha(z^\alpha)^{(l)} 
+ \sum_{k=0}^{l-1}D^{l-k}\left(\sum_\alpha A'_\alpha(z^\alpha)^{(k)}\right)$,
\begin{eqnarray}\label{1}
 \nonumber\sum_\alpha A_\alpha^\cdot z^\alpha
&=&-P\sum_\alpha a_\alpha^\cdot\frac{\partial z^\alpha}{\partial z_j}\\
\nonumber\sum_\alpha A_\alpha^\cdot (z^\alpha)'&=& \left(-P\sum_\alpha a_\alpha^\cdot\frac{\partial z^\alpha}{\partial z_j}\right)'\\
\nonumber\sum_\alpha A_\alpha^\cdot (z^\alpha)^{(2)}
&=&\left(-P\sum_\alpha a_\alpha^\cdot\frac{\partial z^\alpha}{\partial z_j}\right)^{(2)}\\
\sum_\alpha A_\alpha^\cdot(z^\alpha)^{(3)}
&=&\left(-P\sum_\alpha a_\alpha^\cdot\frac{\partial z^\alpha}{\partial z_j}\right)^{(3)}\\
\nonumber&\vdots&\\
\nonumber\sum_\alpha A_\alpha^\cdot(z^\alpha)^{(\kappa)}
&=&\left(-P\sum_\alpha a_\alpha^\cdot\frac{\partial z^\alpha}{\partial z_j}\right)^{(\kappa)}
\end{eqnarray}
or also
\begin{eqnarray}\label{2}
 \nonumber\sum_\alpha A_\alpha^\cdot z^\alpha=-P\sum_\alpha a_\alpha^\cdot\frac{\partial z^\alpha}{\partial z_j}\\
\sum_\alpha (A_\alpha^\cdot)' z^\alpha=
\sum_\alpha (A_\alpha^\cdot)'' z^\alpha=
\sum_\alpha (A_\alpha^\cdot)^{(3)}z^\alpha=\cdots\\
\nonumber\cdots=\sum_\alpha (A_\alpha^\cdot)^{(\kappa)}z^\alpha=0.
\end{eqnarray}

When $P$ is of degree less than $2$, the first equation in the set~(\ref{2}) can be fulfilled with constant $A^\cdot_\alpha$, 
making the other equations tautological.

When $P$ is of the form $P=z_i^k$, because the only non-zero term in the right hand side of ~(\ref{2}) can be written as 
$$\sum_{\mid\beta\mid\leq d} b^{\cdot}_\beta z^\beta
+\sum_{\ell=1}^{k-1}
\sum_{\atop \mid\beta\mid= d} b^{\cdot\ell}_\beta z^{\beta+\ell\epsilon_i},$$
we look for $A^\cdot_\alpha$ in the form 
$$A^\cdot_\alpha:=
\sum_{\gamma, \mid\gamma\mid\leq \kappa\atop \mid\alpha+\gamma\mid\leq d} A^{\cdot\gamma}_\alpha z^\gamma
+\sum_{\ell=1}^{\min (\alpha_i,k-1)}
\sum_{\gamma, \mid\gamma\mid\leq\kappa \atop \mid\alpha+\gamma-\ell\epsilon_i\mid=d} A^{\cdot\ell,\gamma}_\alpha z^{\gamma}.$$
Note that for $\alpha_i\geq\ell$, the multiindex $\alpha+\gamma-\ell\epsilon_i$ is non-negative.
Then the set~(\ref{2}) rewrites, after recursive simplifications of all terms involving a $z_l^{(k)}$-variable with $k>1$, as a set of systems, one for each multiindex $\mu+\ell\epsilon_i$ where $\mu$ is a multiindex of length $\mid\mu\mid\leq d$ when $\ell=0$, or $\mid\mu\mid= d$ when $1\leq\ell\leq k-1$. They have disjoint sets of indeterminates $(A^{\cdot\ell,\gamma}_{\alpha})
_{\alpha+\gamma=\mu+\ell\epsilon_i\atop
\mid\alpha\mid\leq d,\mid\gamma\mid\leq\kappa}$. 
Note that for $\alpha_i\geq\ell$ the equality $\alpha+\gamma=\mu+\ell\epsilon_i$ implies $\gamma\leq\mu$.
The coefficient on the row indexed by the multiindex 
$\delta\leq \mu$ of length $\mid\delta\mid\leq\kappa$ and the column indexed by
 $\gamma\leq\mu$ of length $\mid\gamma\mid\leq \kappa$ is $z^{\mu+\ell\epsilon_i-\gamma}\frac{\partial^{\mid\delta\mid}z^{\gamma}}{(\partial z)^\delta} $. 
Its determinant is checked, as in~\cite{paun}, to be non-zero, for otherwise there would exist a non-zero polynomial of multidegree less or equal to $\mu$ and total degree less or equal to $\kappa$ with all derivatives of order less or equal to $\mu$ and total order less or equal to $\kappa$ vanishing.
Let the polynomial $P$ run over the set of polynomials in $z_i$ of degree less or equal to $\kappa$. 
Over the set $\{z_i'\not=0\}$, the determinant, computed by induction using 
$(z_i^j)^{(l)}=(jz_i^{j-1}z_i')^{(l-1)}=j\sum_{a=0}^{l-1}\binom{l-1}{a} (z_i^{j-1})^{(a)}z_i^{(l-a)}$ and combinaison of rows,
$$\det\left(  \begin{array}{ccccc}
 1&z_i&z_i^2&\cdots&z_i^\kappa\\   1'&(z_i)'&(z_i^2)'&\cdots&(z_i^\kappa)'\\  &&\vdots&&\\&&\vdots&&\\
 (1)^{(\kappa)}&(z_i)^{(\kappa)}&(z_i^2)^{(\kappa)}&\cdots&(z_i^\kappa)^{(\kappa)}  \end{array} \right)=1!2!\cdots\kappa !(z'_i)^\frac{\kappa(\kappa+1)}{2}$$
 does not vanish.
This shows that  every vector of
$$T\left(\xc^A_\kappa/\xc^A\right)_{(s_\kappa(b_0))}
\subset\left(T\xc^A_\kappa\right)_{(s_\kappa(b_0))}
=T\left(\mathfrak{X}_\kappa/\pb^N\right)_{(A,s_\kappa(b_0))}\subset \left(T\mathfrak{X}_\kappa\right)_{(A,s_\kappa(b_0))}$$
is, up to ``horizontal vectors'', the value of a meromorphic vector field on 
$\Pi_{\kappa,0}^{-1}(U_A)\subset \mathfrak{X}_\kappa$ 
holomorphic when viewed with values in $\Pi_{\kappa,0}^\star\oc_{\pb^{n+1}}(\kappa)$.

For ``horizontal vectors'' (i.e. when $P=0$), we use the set~(\ref{1}).
By Cramer formulae, over the set $\{z_i'\not=0\}$, for any given set of $(A^\cdot_\alpha)_{\mid\alpha\mid\leq\kappa\atop\alpha\not=l\epsilon_i}$
there exists $(A^\cdot_{l\epsilon_i})_l$ that fulfil the previous equations.
Their pole order is less or equal to $\kappa^2+2\kappa$.
The missing directions $(A^\cdot_\alpha)_{\mid\alpha\mid>\kappa}$ are obtained with even smaller pole order,
by considering some universal relations in the differential algebra of polynomials.
Details for this last paragraph can be red in~\cite{merker}.

\section{Appendix : Using Morse inequalities for families of surfaces}
We check that in the case of surfaces, the bound $\kappa=n+1$ is optimal to find differential equations using holomorphic Morse inequalities. 

Remark first that the numbers $\lc$, that appeared in the recursion formula for Segre classes of the bundles $\cf_k$, can easily be computed writing Pascal triangle. 
They also fulfil the relation 
$$\lc_e^{f}-\lc_{e+1}^f=\lc_{e+1}^{f+1}.$$

$$\begin{array}{r|rrrrrrrrrr}
\lc_e^f&e=0&e=1&e=2&e=3&e=4&e=5&e=6&e=7&e=8&e=9\\
\hline\\
f=0&1&&&&&&&&&\\
f=1&0&1&&&&&&&&\\
f=2&1&-1&1&&&&&&&\\
f=3&0&2&-2&1&&&&&&\\
f=4&1&-2&4&-3&1&&&&&\\
f=5&0&3&-6&7&-4&1&&&&\\
f=6&1&-3&9&-13&11&-5&1&&&\\
f=7&0&4&-12&22&-24&16&-6&1&&\\
f=8&1&-4&16&-34&46&-40&22&-7&1&\\
f=9&0&5&-20&50&-80&86&-62&29&-8&1\\
\end{array}$$

\subsection{On $\xc_1$}
We choose $\ep$ to be equal to the bound we found when computing the generic nef cone of $\xc$, that is $\ep:=\frac{r}{3d}$. Then, we take $A=\oc_{\xc_1}(0,2;1)\otimes \oc_{\xc}(-\ep x, x)$ and $B=\oc_{\xc}(0, 2+x)$
with first Chern class $$a=\alpha_1+2\alpha + x(\alpha-\ep \beta)=\alpha_1+(2+x)\alpha -x\ep \beta$$ and 
$$b=(2+x)\alpha.$$ 
We find
\begin{eqnarray*}
 A^5-5A^4B&=&(\alpha_1-\ep x\beta)^5-10(\alpha_1-\ep x\beta)^3(2+x)^2\alpha^2-20(\alpha_1-\ep x\beta)^2(2+x)^3\alpha^3\\
&=& s_3-5\ep xs_2\beta-10(2+x)^2s_1\alpha^2-20(2+x)^3\alpha^3+30\ep (2+x)^2x\alpha^2\beta
\end{eqnarray*}
whose dominant term 
\begin{eqnarray*}
 \left[-4\chi+20\ep x\right]d^2+20\left[1-(2+x)^2\right]rd&=&-4\chi d^2-20\left[3+11/3x+x^2\right]rd
\end{eqnarray*}
is negative.

\subsection{On $\xc_2$}
Here 
we take $A=\oc_{\xc_2}(0,6 ;2,1)\otimes \oc_{\xc_1}(0, 2y;y)\otimes\oc_{\xc}(-\ep x, x)$ and $B=\oc_{\xc}(0, 6+2y+x)$
with first Chern class 
\begin{eqnarray*}
 a&=&(\alpha_2+2\alpha_1+6\alpha)+y(\alpha_1+2\alpha) + x(\alpha-\ep \beta)\\
&=&\alpha_2+(2+y)\alpha_1+(6+2y+x)\alpha-\ep x\beta
\end{eqnarray*}
and $b=(6+2y+x)\alpha$.
The bundle $A\otimes B^{-1}$ is $\oc_{\xc_2}(-\ep x, 0;2+y,1)$. 

We compute only the term $(A^7-7A^6B)_{dom}$ in $A^7-7A^6B$ of degree $3$ in $(r,d)$.
From the computation of the direct images on $\xc$ of the Segre classes of $\cf_1$, and from the Segre numbers of $\cf_0$ on $\xc$ we infer that the contributions have to contain a part in $s_1s_2$ or $s_1^2$ and  should therefore contain only one power of $\alpha$ or $\beta$.
We find that the dominant term is, viewed in $\xc_2$
\begin{eqnarray*}
 (A^7-7A^6B)_{dom} &= &[\alpha_2+(2+y)\alpha_1]^7+7[\alpha_2+(2+y)\alpha_1]^6[(6+2y+x)\alpha-\ep x\beta]\\
&&-7[\alpha_2+(2+y)\alpha_1]^6(6+2y+x)\alpha\\
&=&[\alpha_2+(2+y)\alpha_1]^7-7\ep x[\alpha_2+(2+y)\alpha_1]^6\beta
\end{eqnarray*}

viewed in $\xc_1$
$$\begin{array}{ccccrcr}
(A^7-7A^6B)_{dom} &= &s_5(\cf_1)&+& 7(2+y)\ \alpha_1\  s_4(\cf_1) &-& 7\ep x s_4(\cf_1)\beta\\
&&&+& 21(2+y)^2\alpha_1^2s_3(\cf_1) &-& 7\times 6\ep x (2+y) s_3(\cf_1)\alpha_1\beta\\
&&&+& 35(2+y)^3\alpha_1^3s_2(\cf_1) &-& 7\times 15\ep  x (2+y)^2 s_2(\cf_1)\alpha_1^2\beta\\
&&&+& 35(2+y)^4\alpha_1^4s_1(\cf_1) &-& 7\times 20\ep x (2+y)^3 s_1(\cf_1)\alpha_1^3\beta\\
&&&+& 21(2+y)^5 \alpha_1^5   \ \ \ \         &-& 7\times 15\ep x (2+y)^4\ \ \ \         \alpha_1^4 \beta
\end{array}
$$
This leads to the following expression for the dominant term, viewed in $\xc$
\begin{eqnarray*}
 (A^7-7A^6B)_{dom} &=& [-2-14(2+y)+63(2+y)^2-70(2+y)^3+35(2+y)^4]s_1s_2\\
&&-7\ep x  [-13+42(2+y)-45(2+y)^2+20(2+y)^3]s_1^2\beta\\
&=&[-2-14(2+y)+63(2+y)^2-70(2+y)^3+35(2+y)^4](\chi d^3-12rd^2)\\
&&-7\ep x  [-13+42(2+y)-45(2+y)^2+20(2+y)^3]d^3\\
&=&\left(222+518\,y+483\,{y}^{2}+210\,{y}^{3}+35\,{y}^{4}\right)(\chi d^3-12rd^2)\\
&&-7\ep x\left(51+102\,y+75\,{y}^{2}+20\,{y}^{3}\right)d^3
\end{eqnarray*}
We can apply Schwarz lemma provided $\ep x>\chi (3+2y)$.
This would lead to 
\begin{eqnarray*}
 (A^7-7A^6B)_{dom} &\leq&\left(222+518\,y+483\,{y}^{2}+210\,{y}^{3}+35\,{y}^{4}\right)(\chi d^3-12rd^2)\\
&&-7\chi (3+2y)\left(51+102\,y+75\,{y}^{2}+20\,{y}^{3}\right)d^3\\
 &\leq&-\left(849+2338\,y+2520\,y^2+1260\,y^3+245\,y^4\right) \chi d^3\\
&&-\left( 2664+6216\,y+5796\,{y}^{2}+2520\,{y}^{3}+420\,{y}^{4} \right) r{d}^{2}
\end{eqnarray*}

\subsection{On $\xc_3$}

Here we take $A$ and $B$ with first Chern class 
\begin{eqnarray*}
a
&=&(\alpha_3+2\alpha_2+6\alpha_1+18\alpha)+z(\alpha_2+2\alpha_1+6\alpha)\\
&&+y(\alpha_1+2\alpha) + x(\alpha-\ep \beta)\\
&=&\alpha_3+(2+z)\alpha_2+(6+2z+y)\alpha_1+(18+6z+2y+x)\alpha-\ep x\beta
\end{eqnarray*}
and $$b=(18+6z+2y+x)\alpha.$$

The bundle $A\otimes B^{-1}$ is $\oc_{\xc_2}(-\ep x, 0;6+2z+y,2+z,1)$. 
In order to apply Schwarz lemma, we choose 
$$\ep x=9+3z+y.$$

 The dominant term of $A^9-9A^8B$ is (computed with Maple)

$( 34272\,{y}^{3}z+3304896\,{z}^{3}+17136\,{z}^{6}+25200\,{y}^{2}
{z}^{4}+1332648+906336\,y
+3997944\,z+495936\,{y}^{2}z+34272\,y{z}^{5}+181440\,{y}^{2}{z}^{3}
+222768\,{z}^{5}+212544\,{y}^{2}
+2416896\,yz+1391040\,y{z}^{3}+1189440\,{z}^{4}+5016096\,{z}^{2}
+352800\,y{z}^{4}+17136\,{y}^{3}+25200\,{y}^{3}{z}^{2}
+6720\,{y}^{3}{z}^{3}+2613744\,y{z}^{2}+450576\,{y}^{2}{z}^{2} ) rd^3$

$
-( 869904\,{y}^{3}z+44108988\,{z}^{3}+559608\,{z}^{6}+32130\,{y}^{4}z
+772380\,{y}^{2}{z}^{4}+16542612+12428586\,y
+49627836\,z+8196300\,{y}^{2}z+18900\,{y}^{4}{z}^{2}
+1085616\,y{z}^{5}+3507840\,{y}^{2}{z}^{3}
+3780\,{y}^{4}{z}^{3}+4306554\,{z}^{5}+3512700\,{y}^{2}+30564\,{z}^{7}
+33142896\,yz+21170016\,y{z}^{3}+19278\,{y}^{4}
+18008802\,{z}^{4}+63329508\,{z}^{2}+6674220\,y{z}^{4}+434952\,{y}^{3}
+664020\,{y}^{3}{z}^{2}+221760\,{y}^{3}{z}^{3}
+26460\,{y}^{3}{z}^{4}+36642312\,y{z}^{2}+65016\,{y}^{2}{z}^{5}
+7663572\,{y}^{2}{z}^{2}+71316\,{z}^{6}y ) \chi d^3.
$

\bigskip

\enlargethispage{2cm}
\end{document}